\documentclass{amsart}
\usepackage{graphicx} 

\usepackage{amsmath}
\usepackage{amssymb}
\usepackage{amsthm}

\usepackage{dsfont}

\usepackage{esint}

\usepackage{mathrsfs}

\theoremstyle{theorem}
\newtheorem{theorem}{Theorem}[section]

\theoremstyle{definition}

\theoremstyle{definition}

\theoremstyle{definition}

\theoremstyle{theorem}
\newtheorem{lemma}[theorem]{Lemma}

\theoremstyle{theorem}
\newtheorem{proposition}[theorem]{Proposition}

\theoremstyle{theorem}

\theoremstyle{corollary}
\newtheorem{corollary}[theorem]{Corollary}

\theoremstyle{question}
\newtheorem{question}[theorem]{Question}

\theoremstyle{remark}
\newtheorem{remark}[theorem]{Remark}

\DeclareMathOperator{\Ker}{Ker}

\usepackage{polynom}

\DeclareMathOperator{\Aut}{Aut}

\DeclareMathOperator{\Sym}{Sym}

\DeclareMathOperator{\Mod}{Mod}
\DeclareMathOperator{\Hol}{Hol}
\DeclareMathOperator{\Diff}{Diff}

\usepackage{comment}
\usepackage{appendix}

\usepackage{tikz-cd}
\usetikzlibrary{matrix,arrows,decorations.pathmorphing}

\title{Entropy-Minimizing Diffeomorphisms on a $G_2$-Manifold}
\author{Ollie Thakar}
\address{\parbox{\linewidth}{Department of Mathematics, Harvard University, Massachusetts, 02138}}
\email{othakar@math.harvard.edu}

\begin{document}

\maketitle

\begin{abstract}
    In this paper, we construct infinitely many diffeomorphisms of a Joyce manifold $M$ which achieve Yomdin's homological lower bound for topological entropy, imitating a recent construction of Farb-Looijenga for K3 surfaces. Moreover, following a recent paper by Crowley-Goette-Hertl, we show these diffeomorphisms act freely on a connected component of the Teichm\"uller space of $G_2$ structures on $M$, and hence that the homotopy moduli space of $G_2$ structures on $M$ has infinite fundamental group. We also discuss a putative analogy between dynamics on a $G_2$ manifold and that of an algebraic surface, and prove a theorem about its limitations. 
\end{abstract}

\section{Introduction}

The compact Lie group $G_2$ is defined as the subgroup of $SO(7)$ which preserves the 3-form $\varphi_0 \in \Lambda^3\mathbb{R}^7$ defined as follows: $$\varphi_0 = e_1\wedge e_2\wedge e_5+ e_3\wedge e_4\wedge e_5+e_1\wedge e_3\wedge e_6-e_2\wedge e_4\wedge e_6+e_1\wedge e_4\wedge e_7-e_2\wedge e_3\wedge e_7+e_5\wedge e_6\wedge e_7,$$ where $e_1,\dots, e_7$ is an orthonormal basis of $\mathbb{R}^7.$ A 7-manifold $M$ has a \emph{torsion-free $G_2$-structure} or is a \emph{$G_2$-manifold} if it has a Riemannian metric $g$ such that the holonomy group $\Hol(g) \subseteq G_2$, or equivalently, it has a harmonic 3-form $\varphi\in\Omega^3_M$ which, at each point $p\in M$, can be expressed as $\varphi_0$ upon choosing an appropriate basis $e_1,\dots, e_7\in T_p^*M.$ A $G_2$ manifold is \emph{irreducible} if its holonomy group is precisely $G_2$, which when the manifold is closed is equivalent to having finite fundamental group \cite{Joyce1}.

The ``Teichm\"uller space'' $\mathcal{T}(M)$ of torsion-free $G_2$-structures divided by the diffeomorphisms of $M$ isotopic to the identity is known to be a smooth manifold of dimension $b^3(M)$ \cite[Theorem C]{Joyce1}. However, little is known about the moduli space $\mathcal{M}(M)$ of torsion-free $G_2$-structures divided by the full diffeomorphism group. Clearly, $\mathcal{M}(M)$ is the quotient of $\mathcal{T}(M)$ by the action of the smooth \emph{mapping class group} $\Mod(M):=\pi_0\Diff(M).$

This paper provides an example of an irreducible $G_2$-manifold that has infinite mapping class group explicitly visible by formulas in coordinates, and with diffeomorphisms satisfying an interesting dynamical property:

\begin{theorem}
    There exists a closed irreducible $G_2$-manifold $(M, \varphi)$ and an infinite family of diffeomorphisms $f_i$, each in a distinct mapping class, satisfying: \begin{enumerate}
        \item Each $f_i$ has positive topological entropy and minimizes topological entropy in its homotopy class.
        \item Each $f_i$ acts freely on the space $\mathcal{T}(M).$
        \item Each $f_i$ preserves the connected component of $\mathcal{T}(M)$ containing $[\varphi].$
    \end{enumerate}
\end{theorem}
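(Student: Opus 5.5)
The plan is to take $M$ to be a Joyce manifold obtained by resolving a quotient $T^7/\Gamma$, with $\Gamma$ a finite group of isometries preserving the flat $G_2$-structure $\varphi_0$. The resolution should be arranged so that $M$ fibres, at least away from the resolved singular locus, over a lower-dimensional torus quotient. The key point is that a hyperbolic matrix $A\in SL(n,\mathbb{Z})$ acting on a sub-torus factor, in the style of Farb--Looijenga's Kummer-type K3 construction, can be made to commute with, or normalize, $\Gamma$. Then $A$ descends to $T^7/\Gamma$ and permutes the singular strata.

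Concretely, I would write $T^7=T^3\times T^4$ or $T^4\times T^3$ and choose $\Gamma$ so that it acts on one factor by sign changes. I would then take $f_i$ induced by $A^i\times \mathrm{id}$ (or a product of such maps) on a factor where the group acts by $\pm 1$, since $\pm 1$ commutes with every linear map. The map $A$ should preserve the lattice fixed-point set modulo the translations in $\Gamma$. The descended map lifts to the resolution because Joyce's resolution is local near each singular stratum and is modeled on (ALE space)$\times$(flat torus). The map permutes strata compatibly, and on each stratum I can extend it by a product of a linear torus map with the identity on the ALE fibre, then smooth by isotopy in the gluing region.

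For (1), I would use Yomdin's theorem: $h_{top}(f)\ge \log \rho(f^*)$, where $\rho$ is the spectral radius on $H^*(M;\mathbb{R})$. This bound is a homotopy invariant lower bound. For the upper bound, I compute $h_{top}(f_i)$ directly. On the torus part $f_i$ is linear, with entropy equal to $\sum\log|\lambda|$ over eigenvalues with $|\lambda|>1$. The resolved strata contribute the entropy of the linear torus map on the base times the identity on the fibre. Entropy of the whole is the maximum over a finite invariant decomposition of closed pieces, since the gluing region can be taken invariant. It then remains to check that $\log\rho(f^*)$ equals this number. Cohomology of $M$ contains $\Lambda^k$ of the torus cohomology, invariant under $\Gamma$, and the exterior power containing the product of all expanding eigenvectors must survive $\Gamma$-invariance. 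I expect this to be the main obstacle, since one must pick $A$ and $\Gamma$ so the relevant invariant exterior power is nonzero. Positivity follows from hyperbolicity of $A$, and distinct powers $A^i$ give distinct actions on $H^*$, hence distinct mapping classes.

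For (2) and (3), I would follow Crowley--Goette--Hertl. For (3), the component of $\mathcal{T}(M)$ containing $[\varphi]$ also contains $[f_i^*\varphi]$, because $f_i^*\varphi$ is again a Joyce-type resolution structure. The Joyce structures obtained by varying the flat torus metric and gluing parameters form a connected family, and $f_i^*$ simply changes the flat metric on the torus by $A^i$; the space of flat $\Gamma$-compatible metrics is connected. For (2), suppose $f_i$ fixed a point of $\mathcal{T}(M)$. Then some $f_i\circ g$, with $g$ isotopic to the identity, would be an isometry of a $G_2$ metric. The isometry group of a compact irreducible $G_2$ manifold is finite, because $b^1=0$ forces it to have no continuous part. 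So $f_i^*$ would have finite order on $H^3$, contradicting the fact that $A^i$ acts with eigenvalues off the unit circle on the invariant part of $H^3$. This again needs the expanding eigenvalue to appear in $H^3$, which I would verify alongside the cohomology computation in (1).
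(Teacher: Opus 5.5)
The overall strategy matches the paper's: Yomdin's bound plus a direct entropy count for (1), and Crowley--Goette--Hertl families gluing for (3). Your argument for (2) is a valid substitute for the paper's zero-entropy argument: an isometry lies in a finite group, so it would act on $H^3$ with finite order. The genuine gap is the existence of $M$. You never specify $\Gamma$, and the mechanism you propose cannot produce an irreducible manifold. Suppose, as you require (``$\pm1$ commutes with every linear map''), that the linear parts $\Gamma_0\subset G_2$ act by scalars $\epsilon_g=\pm1$ on the factor $P$ carrying $A$; a hyperbolic $A$ needs $\dim P\ge 2$. Pick independent $u,v\in P$. Since $G_2$ preserves the cross product, $g(u\times v)=gu\times gv=\epsilon_g^2\,u\times v=u\times v$ for every $g\in\Gamma_0$, and $u\times v\neq 0$. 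So the parallel $1$-form $\langle u\times v,\cdot\rangle$ on $T^7$ is $\Gamma$-invariant. Hence $b^1(T^7/\Gamma)\ge 1$, $b^1(M)\ge 1$, $\pi_1(M)$ is infinite, and $M$ does not have holonomy $G_2$. The same applies to each factor of a ``product of such maps''. So the obstacle is not the one you expected, namely whether the expanding class survives in invariant cohomology: no irreducible example exists in your framework.

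The paper avoids this by letting the hyperbolic matrix act with multiplicity two rather than on a single factor. It takes $\mathbb{A}=(\mathbb{C}^3\times\mathbb{R})/(\Lambda\times\mathbb{Z})$ with $\Lambda=\mathbb{Z}^3\oplus\xi\mathbb{Z}^3$, $\Gamma\cong S_3$ generated by $\alpha,\beta$, and $T\in SL(3,\mathbb{Z})$ acting by $z\mapsto Tz$, i.e.\ as $T\otimes\mathrm{id}$ on $\mathbb{R}^3\otimes\mathbb{C}$. $\Gamma$ acts only through the $\mathbb{C}$ factor (and on $x$), so it commutes with $T$. On each plane $e\otimes\mathbb{C}$, $e\in\mathbb{R}^3$, it acts through the dihedral group $S_3\subset O(2)$, and the reflection $\beta$ reverses $u\times v$, which kills the fixed vector. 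The cost is that the entropy becomes $2\sum_{|\lambda|>1}\log|\lambda|$, while the invariant classes $\mathrm{Im}(dz_i\wedge d\overline{z_j}\wedge dx)$ give spectral radius $|\lambda_1|^2$. Hence one restricts to $T$ with exactly one eigenvalue of modulus $>1$.

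Two further steps in your sketch are too thin. First, near each singular torus the map acts on the normal $\mathbb{C}^2$ by $T\oplus\mathrm{id}$, which is not complex linear, so it does not simply extend as the identity on the ALE fibre. You need the real blow-up, the collar isotopy $\gamma$ to the identity, and the Hopf collapse. The collar's entropy must then be bounded using the fact that linear maps have zero entropy on spheres of rays. Second, for (3), connectedness of flat $\Gamma$-invariant structures is not enough. You need an explicit closed family $\varphi_s(t)$, interpolating by $\gamma(1-s)$ and $\Psi_s$ on the necks, that satisfies the CGH estimates uniformly in $s$. You also need an argument that the endpoint of the glued path is exactly $f^*\widetilde\varphi(t)$.
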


\begin{proof}[Proof of (2).]
    Suppose $f_i$ does not act freely on $\mathcal{T}(M).$ Then, there exists a torsion-free $G_2$-structure $\widetilde\varphi$ and some diffeomorphism $h: M\to M$ isotopic to the identity such that $f_i^*\widetilde\varphi = h^*\widetilde\varphi.$ Thus, $h\circ f_i$ preserves $\widetilde\varphi.$ Any diffeomorphism preserving a $G_2$ structure also preserves its Riemannian metric (see, for example, \cite[Thereom 2.3.3]{Karigiannis}), and hence must have zero topological entropy. Since $f_i$ has positive topological entropy and $h\circ f_i$ is clearly isotopic to $f_i,$ this contradicts (1).
\end{proof}

\begin{remark}
    Diffeomorphisms in this paper are to be class $C^\infty.$
\end{remark}

\subsection{Context}

This paper generalizes constructions from two different areas: dynamical systems on smooth manifolds, and moduli of $G_2$-metrics. We briefly discuss the interest of this result in each area:

\subsubsection{Dynamical Systems} Our construction is very closely modeled on a construction of Farb-Loojienga for pseudo-Anosov-type diffeomorphisms on K3 surfaces \cite{FL}. K3 surfaces can also be considered as manifolds admitting Riemannian metrics of exceptional holonomy, in this case $\text{Sp}(1)\subset SO(4)$. Farb and Loojienga construct infinitely many positive topological entropy diffeomorphisms on a K3 surface that minimize topological entropy in their isotopy class but also do not preserve any complex structure (therefore, do not preserve a torsion-free $\text{Sp}(1)$-structure either.)

Besides the Farb-Looijenga examples, the only other known examples of positive entropy diffeomorphisms that minimize entropy in their isotopy classes are biholomorphic automorphisms of K\"ahler manifolds, certain Anosov diffeomorphisms of nilmanifolds, and certain diffeomorphisms of $S^3\times S^3$ \cite{FL, Gromov, Franks, Fried}. Our construction produces another example of this phenomenon.

\subsubsection{$G_2$ Moduli} Upon inspecting the construction of $f_i$ in Theorem 1.1, we find that the image of $\text{Mod}(M)$ in $\Aut(H^*(M; \mathbb{Z}))$ is infinite. Moreover, an infinite subgroup of the mapping class group preserves at least one connected component of the Teichm\"uller space $\mathcal{T}(M)$. This comprises, to the author's knowledge, the first example of a connected component of the moduli space $\mathcal{M}(M)$ of a $G_2$-manifold which is covered infinitely by at least one connected component of $\mathcal{T}(M).$

The first example of a connected component of $\mathcal{T}(M)$ (for different $G_2$-manifolds $M$) with non-trivial homotopy groups was furnished by Crowley-Goette-Hertl in \cite{CGH}, in which they used a families gluing construction to create non-trivial bundles over $\mathbb{CP}^1$ of $G_2$-metrics on several examples of Joyce manifolds. Each Joyce manifold is composed by gluing a finite quotient of a flat torus to some products of ALE manifolds with tori. The examples in \cite{CGH} are constructed by gluing trivial families on the flat torus to non-trivial families on the ALE parts. The proof of part (3) of Theorem 1 also uses the families gluing theorem in \cite{CGH}, but instead involves gluing a trivial family on the ALE parts to a non-trivial family on the flat torus part. 

The argument in \cite{CGH} leads to considering ``homotopy moduli spaces'' of $G_2$ metrics defined in terms of homotopy quotients; the main result of this paper can also be thought of in this language. The section \cite[before Corollary E]{CGH} defines the \emph{homotopy moduli space} of a $G_2$ manifold $M$ to be the homotopy quotient: $$ \mathcal{HM}(M) := \{\text{metrics~with~holonomy~}G_2\} // \Diff(M),$$ which clearly has the \emph{homotopy Teichm\"uller space}: $$\mathcal{HT}(M):= \{\text{metrics~with~holonomy~}G_2\} // \Diff_0(M)$$ as a covering space. (The convention of \cite{CGH} is to call $\mathcal{HT}$ the homotopy moduli space.) We have the following corollary, graciously pointed out to me by Thorsten Hertl:

\begin{corollary}\label{Hertl}
    For the manifold $M$ considered in Theorem 1.1, the homotopy moduli space of $G_2$ structures has at least 1 component with infinite fundamental group.
\end{corollary}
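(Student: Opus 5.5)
The plan is to compare the covering $\mathcal{HT}(M)\to\mathcal{HM}(M)$ with the group $\Mod(M)=\pi_0\Diff(M)$ and read off $\pi_1$ from the long exact sequence of a fibration.

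Write $\mathcal{G}$ for the space of metrics with holonomy $G_2$ on $M$. The homotopy quotients fit into a fibration
$$\mathcal{HT}(M)=\mathcal{G}//\Diff_0(M)\longrightarrow \mathcal{HM}(M)=\mathcal{G}//\Diff(M)\longrightarrow B\Mod(M).$$
Because $\Mod(M)$ is discrete, this is a covering map. Its deck group is $\Mod(M)$, which acts on $\pi_0\mathcal{HT}(M)=\pi_0(\mathcal{G}/\Diff_0(M))=\pi_0\mathcal{T}(M)$; one can identify $\mathcal{T}(M)$ with metrics mod $\Diff_0$ up to the usual scaling and choice-of-form issues, which do not affect $\pi_0$.

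Fix the component $\mathcal{C}\subset\mathcal{HT}(M)$ corresponding to the component of $\mathcal{T}(M)$ containing $[\varphi]$, and let $\Gamma\subseteq\Mod(M)$ be its stabilizer. By part (3) of Theorem 1.1, every $[f_i]$ lies in $\Gamma$.

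The long exact sequence of the covering, restricted to this component, gives a short exact sequence
$$1\to \pi_1(\mathcal{C})\to \pi_1(\mathcal{C}')\to \Gamma\to 1,$$
where $\mathcal{C}'$ is the image component of $\mathcal{HM}(M)$. Concretely, a loop in $\mathcal{HM}(M)$ lifts to a path in $\mathcal{HT}(M)$ joining $x$ to $g\cdot x$ for some $g\in\Gamma$. Conversely, for $g\in\Gamma$ the points $x$ and $g\cdot x$ both lie in the connected space $\mathcal{C}$, so we may join them by a path; that path projects to a loop. Hence $\pi_1(\mathcal{C}')\to\Gamma$ is surjective.

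So it suffices to show $\Gamma$ is infinite. The $f_i$ lie in distinct mapping classes and all belong to $\Gamma$, so $\Gamma$ is infinite, and therefore $\pi_1(\mathcal{C}')$ is infinite.

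The main obstacle is the bookkeeping that identifies the covering's deck group action on components with the action of $\Mod(M)$ on $\pi_0\mathcal{T}(M)$, so that Theorem 1.1(3) gives exactly the stabilizer statement. This requires checking that the homotopy quotient by $\Diff_0(M)$ has the same $\pi_0$ as the strict quotient. That holds since $\pi_0$ of a homotopy quotient is the orbit set of $\pi_0\mathcal{G}$, and $\pi_0\mathcal{G}$ maps onto $\pi_0$ of the space of torsion-free $G_2$-structures up to scale.

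Note that freeness (part (2)) is not needed for this corollary; it only shows $\Gamma$ acts on $\mathcal{C}$ without fixed points.
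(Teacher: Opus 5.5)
Your argument is correct and is essentially the paper's. Both restrict the regular $\Mod(M)$-covering $\mathcal{HT}(M)\to\mathcal{HM}(M)$ to the component through $[\varphi]$, use part (3) to place the $[f_i]$ in that component's stabilizer, and conclude that an infinite deck group forces infinite $\pi_1$. The one difference is how the stabilizer is shown to be infinite: the paper uses a single $f_i$ of infinite order in $\Mod(M)$, detected on $H_3$, whereas you use that the $f_i$ lie in infinitely many distinct mapping classes.

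One correction to your bookkeeping: you need $\pi_0\mathcal{G}\to\pi_0\mathcal{T}(M)$ to be \emph{injective}, which is a path-lifting statement, not onto. The main theorem of Section 4 sidesteps this entirely. It gives a continuous path $\widetilde{\varphi}_s(t)$ of torsion-free structures from $\widetilde{\varphi}(t)$ to $f^*\widetilde{\varphi}(t)$ in the space of structures itself, not just in $\mathcal{T}(M)$. Since $M$ is simply connected, these structures give a path of holonomy-$G_2$ metrics from $g$ to $f^*g$ in $\mathcal{G}$.
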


\begin{proof}
    In the proof of (1) from Theorem 1.1, we will see that there exists at least one $f_i$ in the statement of the theorem with infinite order in the mapping class group (it will, in fact have infinite order in $\Aut(H^*(M; \mathbb{Z}))$.) From part (3) of Theorem 1.1, such $f_i$ preserves a connected component of $\mathcal{T}(M),$ so it therefore must preserve the corresponding component of $\mathcal{HT}(M).$ Hence, the restriction of the covering $\mathcal{HT}(M)\to\mathcal{HM}(M)$ to the connected components containing $[\varphi]$ has infinite deck transformation group.
\end{proof}

\subsection{Outline of the paper} Section 2 will be devoted to the construction of our manifold $M$ and the diffeomorphisms $f_i,$ while Section 3 will be devoted to the proof of Part (1) of Theorem 1.1, most of which consists of computing the topological entropy of the maps $f_i.$ In Section 4, we will prove Part (3) of Theorem 1.1, which has been restated as a result about gluing in families as Theorem 4.3. Finally, in Section 5, we will discuss Theorem 1 in the broader context of dynamical systems on $G_2$-manifolds.  

\subsection{Acknowledgments} The author would like to thank his advisor Peter Kronheimer for his support, as well as Seraphina Lee, Benson Farb, and Thorsten Hertl for valuable discussions. This paper was made with the support of Simons Foundation Award $\#$994330, Simons Collaboration on New Structures
in Low-Dimensional Topology.

\section{Farb-Looijenga Construction for a Joyce Manifold}

The manifold $M$ that we will use in Theorem 1 was first constructed by Joyce as \cite[Example 7]{Joyce2}. We will review the construction of $M$ as a smooth manifold now, reserving the review of Joyce's construction of a $G_2$ metric on $M$ for Section 4. Let $\xi = e^{2\pi i/3}$ and define the lattice $\Lambda = \mathbb{Z}^3\oplus\xi\mathbb{Z}^3\subset \mathbb{C}^3.$ Let $\mathbb{A}$ be the real 7-torus constructed as $\mathbb{A} := \left(\mathbb{C}^3\times\mathbb{R}\right)/\left(\Lambda\times\mathbb{Z}\right).$ We use $z_1,z_2, z_3$ as coordinates on $\mathbb{C}^3$ and $x$ as a coordinate on $\mathbb{R},$ and we notate $z_k = s_k+it_k.$ The maps $$\alpha(z_1, z_2, z_3, x) := \left(\xi z_1, \xi z_2, \xi z_3, x+\frac13\right)$$ and $$\beta(z_1, z_2, z_3, x) := \left(-\overline{z_1}, -\overline{z_2}, -\overline{z_3}, -x\right)$$ descend to diffeomorphisms of $\mathbb{A}.$ Let $\Gamma$ be the finite group generated by $\alpha$ and $\beta$ (so $\Gamma$ is isomorphic to the symmetric group on 3 letters.) There are 6 disjoint 3-dimensional tori $T_1,\dots, T_6\subset \mathbb{A}$, all with trivial normal bundle, which are fixed by some element of $\Gamma$ and permuted by the remaining elements. They are: 

\medskip
\bgroup
\def\arraystretch{1.5}
\begin{tabular}{ c | c }
    $T_1$ & $x=0, z=it$ \\
    \hline
    $T_2$ & $x=\frac12, z=it$ \\
    \hline
    $T_3$ & $x=\frac13, z=e^{i\pi/6}t$ \\
    \hline
    $T_4$ & $x=\frac56, z=e^{i\pi/6}t$ \\
    \hline
    $T_5$ & $x=\frac23, z=e^{5i\pi/6}t$ \\
    \hline
    $T_6$ & $x=\frac16, z=e^{5i\pi/6}t$ \\
\end{tabular}
\egroup
\medskip

Let $Bl_{pt}(B^4)$ be the (complex) blow-up of the open complex manifold $B^4\subset \mathbb{C}^2$ at the origin. We construct $M$ by replacing a copy of $B^4\times T_i\subset\mathbb{A}$ with $Bl_{pt}(B^4)\times T_i,$ for each $i\in\{1,\dots, 6\},$ and then taking the quotient by the induced action of $\Gamma.$ 

Note that $SL(3, \mathbb{Z})\times(\mathbb{Z}/2\mathbb{Z})$ acts on $\mathbb{A}$ by diffeomorphisms: $$(T, t)\cdot (z, x) := (Tz, x+\frac12 t),$$ and this action commutes with $\Gamma.$ These diffeomorphisms preserve the set of tori $T_i$ (although they may act nontrivially on each torus.)

\subsection{The diffeomorphisms}

Imitating \cite[Section 2]{FL}, we will construct an injection from $SL(3, \mathbb{Z})\times(\mathbb{Z}/2\mathbb{Z})$ to the mapping class group $\text{Mod}(M)$, and moreover construct explicit representatives of each mapping class in the image. (In \cite{FL} the injection is from $SL(4,\mathbb{Z})$ but the technique is similar.) So, let $(T, t)\in SL(3, \mathbb{Z})\times(\mathbb{Z}/2\mathbb{Z}).$ Our goal is to construct a diffeomorphism $f_{(T,t)}:M\to M.$ The $f_i$ in Theorem 1 will be a subset of these $f_{(T, t)}.$ For the remainder of this paper, we will take $t = 0$ for simplicity and omit it from the notation; the whole construction continues to work, though, when $t = 1.$

\subsubsection{Normal bundles and tubular neighborhoods of $T_i$}
We will use $U\subseteq \mathbb{C}^2$ to denote an open ball centered around the origin, of fixed radius which is sufficiently small. We will denote by $\nu(T_i)$ a tubular neighborhood of $T_i.$ For $i=1,2,$ let $\tau_i: U^{\subseteq\mathbb{C}^2}\times T^3\to \nu(T_i)$ be a coordinate system in a tubular neighborhood of $T_i$ that identifies $T^3 = \mathbb{R}^3/\mathbb{Z}^3$ with $T_1$ in the obvious way and sends $(a_1+ib_1,a_2+ib_2, p)\in \mathbb{C}^2\times T^3$ to: $$(a_1\partial_{s_1}+ b_1\partial_{s_2}+a_2\partial_{s_3}+b_2\partial_x, p)\in\nu(T_i).$$

For $i=3,4,5,6$ let $\tau_i:U^{\subseteq\mathbb{C}^2}\times T^3\to \nu(T_i)$ be the corresponding coordinates such that $\alpha\circ\tau_i=\tau_{i-2}$. We may choose $U$ so small that the images of the $\tau_i$ are disjoint. Observe that whenever $\beta(T_i)=T_j,$ we have $\beta\circ\tau_i=-\tau_j,$ and also observe that for each $i$ we have $T\circ \tau_i=\tau_i\circ T.$ These coordinates induce complex structures on the normal bundles of the $T_i$ which are evidently preserved by $\Gamma.$

\subsubsection{Real oriented blow-ups}
First denote by $Y$ the effect of performing a real oriented blow-up of $\mathbb{A}$ on each of the six tori $T_i.$ (So, $Y$ is a compact 7-manifold with 6 boundary components each of which is diffeomorphic to $T^3\times S^3$.) The diffeomorphism $(z, x) \mapsto(Tz, x+\frac12 t)$ extends to a diffeomorphism $f_Y:Y\to Y$ of the (real) blown-up manifold.

Define $Y'$ to be the manifold composed of $Y$ with a collar neighborhood $[0,1]\times S^3\times T^3$ glued to each boundary component $\partial\nu(T_i)$ (note that each boundary component may be identified with $S^3\times T^3$ by $\tau_i$.) Choose a path $\gamma:[0,1]\to SL(3, \mathbb{R})$ which is constant near both endpoints and such that $\gamma(0) = T\in SL(3, \mathbb{Z})$ and $\gamma(1)=\text{id}\in SL(3, \mathbb{R}).$ We may specify further that the product of the eigenvalues of $\gamma(t)$ that are greater than 1 is a non-increasing function of $t.$ Now, define the diffeomorphism $f_{Y'}:Y'\to Y'$ as follows. We let $f_{Y'}(y)=f_Y(y)$ whenever $y\in Y,$ and define $f_{Y'}$ on each copy of $[0,1]\times S^3\times T^3$ to be, for $(r, \sigma, p)\in[0,1]\times S^3\times T^3$: $$f_{Y'}(r, \sigma, p)=(r, \left(\gamma(r)\oplus\text{id}\right)(\sigma), Tp)$$

Let $\tilde{M}$ be the result of performing the \emph{Hopf collapse} as in \cite[pp. 7-8]{FL} on each blown-up torus $\{1\}\times S^3\times T^3$: via the trivializations $\tau_i,$ each $S^3\times p\subset S^3\times T^3\subset\partial Y'$ may be identified as the space of oriented real lines in the vector space $\mathbb{C}^2.$ I.e., $\tilde{M}$ ost collapse, for $\ell_1,\ell_2\in S^3\times p,$ we identify $[\ell_1]\sim[\ell_2]$ if $\ell_1$ and $\ell_2$ are both subsets of the same complex line. Since $T$ commutes with the trivializations $\tau_i$, the diffeomorphism $f_{Y'}$ descends to a diffeomorphism $f_{\tilde{M}}$ of $\tilde{M}$. 

We now must define an action of $\Gamma$ on $\tilde{M}$. Let $\Gamma$ act on $Y$ as the real oriented blow-up of the action of $\Gamma$ on $\mathbb{A}.$ This extends to an action on $Y'$ which preserves the complex structures on $NT_i$ by our computation in the previous subsection; hence, this descends to an action on $\tilde{M}.$ Moreover, this action clearly commutes with $f_{\tilde{M}}.$ Thus, $f_{\tilde{M}}$ descends to a diffeomorphism $f_T:M\to M.$ 

\section{Proof of (1) from Theorem 1.1}

The main tool in the proof of (1) from Theorem 1.1, much like in \cite{FL}, is Yomdin's proof of the Shub Entropy Conjecture for $C^\infty$ diffeomorphisms \cite{Yomdin}. This tells us that the topological entropy of a $C^\infty$ diffeomorphism $f$ of a manifold $M$ is bounded below by the log of the spectral radius of the induced action of $f$ on the homology of $M.$ We will compute this spectral radius for our diffeomorphisms $f_T$ and then compute their topological entropy; the proof will be complete when we observe that for infinitely many choices of $T,$ the topological entropy is equal to the logarithm of this spectral radius, and moreover this quantity is nonzero.

\subsection{Computing the Spectral Radius}

\begin{lemma}
    The spectral radius of $(f_T)_*$ on $H_*(M)$ is $|\lambda|^2$ where $\lambda$ is the largest eigenvalue of $T.$ Moreover, it is achieved on $H_3(M).$
\end{lemma}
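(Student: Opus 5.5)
The plan is to compute $H_*(M;\mathbb{R})$ together with the action of $(f_T)_*$ explicitly. The spectral radius on integral homology equals the one on real homology, so it suffices to work over $\mathbb{R}$ and read off eigenvalues. By the standard description of the homology of a resolved orbifold, $H_*(M;\mathbb{R})$ splits into two pieces, both preserved by $f_T$:
\begin{enumerate}
\item the $\Gamma$-invariant part $H_*(\mathbb{A};\mathbb{R})^\Gamma$;
\item the classes supported near the exceptional sets: the $\Gamma$-invariants of $\bigoplus_i [E]\otimes H_*(T_i)$, where $E\cong\mathbb{CP}^1$ is the exceptional curve of $Bl_{pt}(B^4)$.
\end{enumerate}
I would justify this with a Mayer--Vietoris argument for $\tilde M = Y'\cup(\text{Hopf-collapsed collars})$ followed by taking $\Gamma$-invariants, which is exact over $\mathbb{R}$. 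Since $f_T$ agrees with $T$ on $Y$ and the collar isotopy $\gamma$ is homologically invisible, $(f_T)_*$ acts on piece (1) by the map induced by $T$ on the torus. On piece (2) it acts by $\mathrm{id}$ on $[E]$ tensored with $T$ acting on $H_*(T_1)\cong\Lambda^*\mathbb{R}^3$; this uses that $T$ commutes with the $\tau_i$ and acts by the identity on the $\mathbb{C}^2$-fibre directions.

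\textbf{The torus part.} Write $V=\mathbb{R}^3$ with $T$ acting, and note that $\Lambda\otimes\mathbb{R}\cong V\otimes W$, where $W\cong\mathbb{R}^2=\mathbb{C}$ carries the action of $\Gamma\cong S_3$: $\alpha$ acts by multiplication by $\xi$, $\beta$ by $-$conjugation. Thus $W$ is the standard $2$-dimensional irreducible representation and commutes with $T$. The $x$-direction gives a line $\varepsilon$ on which $\alpha$ acts trivially and $\beta$ by $-1$, so $\varepsilon$ is the sign representation. Hence
\[ H_k(\mathbb{A};\mathbb{R}) \cong \Lambda^k(V\otimes W)\oplus\Lambda^{k-1}(V\otimes W)\otimes\varepsilon. \]
I would decompose these via the Cauchy formula $\Lambda^k(V\otimes W)=\bigoplus_\mu S_\mu V\otimes S_{\mu'}W$ and compute $\Gamma$-invariants with the $S_3$ facts $S^2W=1\oplus W$, $\Lambda^2W=\mathrm{sgn}$, $S_{(2,1)}W\cong W$, and $S^3W\supset 1$. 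By Poincar\'e duality it suffices to treat $k\le 3$:
\begin{itemize}
\item $k=1$: no invariants.
\item $k=2$: the invariants are $\Lambda^2V$, with eigenvalues $\lambda_i\lambda_j$ for $i<j$.
\item $k=3$: the invariants are $\Lambda^3V$ (eigenvalue $\det T=1$) and $S^2V\otimes\Lambda^2W\otimes\varepsilon\cong S^2V$, with eigenvalues $\lambda_i\lambda_j$ for $i\le j$.
\end{itemize}
In particular $\lambda_1^2$ occurs in $H_3$, where $\lambda_1=\lambda$ is the eigenvalue of $T$ of largest modulus.

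\textbf{The exceptional part.} Here the eigenvalues are those of $\Lambda^kV$ for $k\le 3$, i.e. $1$, $\lambda_i$, $\lambda_i\lambda_j$ ($i<j$), and $1$. This holds up to signs coming from how the stabilizer $\mathbb{Z}/2$ of $T_1$ acts; it is a single orbit of six tori with stabilizer of order $2$. All of these have modulus at most $|\lambda|^2$.

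Combining the two parts, every eigenvalue of $(f_T)_*$ has modulus at most $|\lambda|^2$, and equality is attained in $H_3(M)$. This proves the lemma.

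The main obstacle is the representation-theoretic bookkeeping: getting the $\Gamma$-invariants right. This includes the twist by $\varepsilon$, which is exactly what produces the $S^2V$ summand in degree $3$. It also includes checking that no invariant summand such as $S_{(2,1)}V$ (which would contribute $\lambda_1^2\lambda_2$) survives; that is why the identification $S_{(2,1)}W\cong W$ is essential. A secondary point needing care is confirming that the Hopf-collapse gluing introduces no further homology beyond $[E]\otimes H_*(T_i)$.
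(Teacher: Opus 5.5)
\textbf{There is a genuine gap: the reduction to degrees $k\le 3$.} Your decomposition into $\Gamma$-invariant torus classes and exceptional classes $[E]\otimes H_*(T_i)$ is correct, and so are your invariant counts in degrees $k\le 3$. You are also more careful than the paper's proof, which says the resolution classes are merely permuted; on $[E]\otimes H_1(T_i)\subset H_3(M)$ the map in fact acts by $T$. The failing step is ``by Poincar\'e duality it suffices to treat $k\le 3$'', and the paper's proof makes exactly the same reduction. For an orientation-preserving diffeomorphism of a closed oriented $7$-manifold, $f_*$ on $H_{7-k}$ is conjugate to the inverse transpose of $f_*$ on $H_k$. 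So duality \emph{inverts} eigenvalues; it does not preserve the spectral radius degree by degree.

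Your own Cauchy-formula bookkeeping in degree $4$ shows the problem. We have $\Lambda^4(V\otimes W)\supset S_{(2,2)}V\otimes S_{(2,2)}W$, and $S_{(2,2)}W=(\Lambda^2W)^{\otimes 2}$ is trivial. Hence $H_4(\mathbb{A};\mathbb{R})^\Gamma\cong S_{(2,2)}V\oplus\Lambda^3V$. Here $S_{(2,2)}V\cong S^2V^*$ carries the eigenvalues $(\lambda_i\lambda_j)^{-1}$ for $i\le j$, among them $\lambda_3^{-2}=(\lambda_1\lambda_2)^2$. Concretely, take $\theta\in H^2(\mathbb{A}/\Gamma)$ to be the eigenclass of $f_T^*$ with eigenvalue $\lambda_1\lambda_2$. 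It has the form $a_s\wedge b_s+a_t\wedge b_t$ with $a_s=\sum_i a_i\,ds_i$, $a_t=\sum_i a_i\,dt_i$, for independent $a,b\in V^*$. Then $\theta\wedge\theta=2\,a_s\wedge b_s\wedge a_t\wedge b_t\neq 0$ is an eigenclass in $H^4(\mathbb{A}/\Gamma)\subset H^4(M)$ with eigenvalue $(\lambda_1\lambda_2)^2$.

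\textbf{Consequence for the statement.} The spectral radius of $(f_T)_*$ on $H_*(M)$ is $\max(|\lambda_1|^2,|\lambda_1\lambda_2|^2)$. Whenever $|\lambda_2|>1$ this is strictly larger than $|\lambda_1|^2$, and it is attained in $H_4$, not $H_3$. An example is $T^{-1}$ where $T$ has a single expanding eigenvalue. As a sanity check, duality makes the total spectral radius invariant under $f\mapsto f^{-1}$, whereas $|\lambda_1(T)|^2$ is not. So your concluding claim that every eigenvalue has modulus at most $|\lambda|^2$ fails in that case, and so does the lemma as stated.

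The lemma holds exactly when $|\lambda_2|\le 1$, which is the only case used later in the proof of Theorem~1.1(1). To repair your argument, add this hypothesis and treat degrees $4$--$7$ explicitly; their eigenvalues are the inverses of those in degrees $3$--$0$. In general $\max(|\lambda_1|^2,|\lambda_1\lambda_2|^2)=\prod_{|\lambda|>1}|\lambda|^2=e^{h_{top}(f_T)}$ by the entropy theorem, so the downstream conclusion survives and even extends to all $T$.

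Two harmless slips:
\begin{itemize}
\item The six tori form two $\Gamma$-orbits $\{T_1,T_3,T_5\}$ and $\{T_2,T_4,T_6\}$ of size three, not one orbit of six.
\item $T$ itself acts on the normal fibres by $T\oplus\mathrm{id}$. It is $f_T$, after the collar isotopy, that is the identity in the fibre near the exceptional set.
\end{itemize}
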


\begin{proof}
    $M$ is simply connected, so by Poincar\'e duality it is enough to consider $H_2(M)$ and $H_3(M).$ The second and third homology of $M$ is the direct sum of the elements of homology coming from $\mathbb{A}/\Gamma$ and from the resolutions. The action of $f_T$ on the homology permutes the factors coming from the resolutions, so we may consider the action of $f_T$ on $H_*(\mathbb{A}/\Gamma),$ which is the same as the induced action of the linear map $T$ on $H_*(\mathbb{A}/\Gamma).$

    We will compute using cohomology for ease: the cohomology of $\mathbb{A}/\Gamma$ is the $\Gamma$-invariant part of the cohomology of $\mathbb{A}.$ Hence, $H^2(\mathbb{A}/\Gamma)$ is generated by $\text{Re}(dz_i\wedge d\overline{z_j}),$ for $i\neq j,$ and $i,j\in\{1,2,3\}$. Likewise, $H^3(\mathbb{A}/\Gamma)$ is generated by $\text{Im}(dz_1\wedge dz_2\wedge dz_3)$, as well as $\text{Im}(dz_i\wedge d\overline{z_j}\wedge dx)$, for $i,j\in\{1,2,3\}$ potentially equal.

    Letting $\lambda_1,\lambda_2, \lambda_3$ be the three eigenvalues of $T$ in non-increasing order of their modulus, the spectral radius of $(f_T)_*$ is the maximum of $|\lambda_i\lambda_j|,$ potentially with $i=j,$ and $|\lambda_i\lambda_j\lambda_k|,$ where $i, j, k$ are all different. Since $T\in SL(3, \mathbb{Z}),$ we have that $\lambda_i\lambda_j\lambda_k=1$ and the maximum of $|\lambda_i\lambda_j|,$ potentially with $i=j,$ is given by $|\lambda_1|^2.$   
\end{proof}

\subsection{Computing the Topological Entropy}


This section is devoted to computing the topological entropy $h_{top}(f_T)$ of our diffeomorphisms $f_T.$ The main lemma we will be using is as follows:

\begin{lemma}[{\cite[Propositions 3.2, 3.5]{FL}}]\label{quotients}
    Suppose the following diagram commutes, where $M$ and $N$ are compact topological spaces, $p:M\to N$ is a surjective continuous map, and $f_M$ and $f_N$ are homeomorphisms: \[\begin{tikzcd}
	M & M \\
	N & N
	\arrow["{f_M}", from=1-1, to=1-2]
	\arrow["p", from=1-1, to=2-1]
	\arrow["p", from=1-2, to=2-2]
	\arrow["{f_N}", from=2-1, to=2-2]
    \end{tikzcd}\]
    Then, $$h_{top}(f_N)\leq h_{top}(f_M)\leq h_{top}(f_N)+\sup_{n\in N}h_{top}(f_M, p^{-1}(n)).$$
\end{lemma}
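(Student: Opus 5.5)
We prove the two inequalities separately, working with the Bowen definition of topological entropy via $(n,\varepsilon)$-spanning and separated sets. Fix metrics $d_M$ on $M$ and $d_N$ on $N$; since $M$ is compact, $p$ is uniformly continuous.

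\emph{Lower bound.} We show $h_{top}(f_N)\le h_{top}(f_M)$ by pushing spanning sets forward. Given $\varepsilon>0$, choose $\delta>0$ such that $d_M(x,y)<\delta$ implies $d_N(p(x),p(y))<\varepsilon$. Let $E\subset M$ be an $(n,\delta)$-spanning set for $f_M$. Since $p\circ f_M^k=f_N^k\circ p$ and $p$ is surjective, $p(E)$ is $(n,\varepsilon)$-spanning for $f_N$. Indeed, every $m\in N$ equals $p(x)$ for some $x$, and $x$ is $\delta$-shadowed along the first $n$ iterates by some $e\in E$, so $m$ is $\varepsilon$-shadowed by $p(e)$. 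Hence $r_n(f_N,\varepsilon)\le r_n(f_M,\delta)$. Taking $\limsup\frac1n\log$ and then letting $\varepsilon\to 0$ gives the lower bound. This part is routine.

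\emph{Upper bound.} This is Bowen's fiber-entropy inequality, and we follow Bowen's original argument. Let $a:=\sup_n h_{top}(f_M,p^{-1}(n))$, where entropy on a non-invariant compact set $K$ is defined using $(n,\varepsilon)$-spanning subsets of $K$ whose shadowing points may lie anywhere in $M$. Fix $\varepsilon>0$ and $\gamma>0$. The steps are as follows.

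First, for each $y\in N$ choose $m_y$ with $r_{m_y}(f_M,\varepsilon,p^{-1}(y))\le e^{m_y(a+\gamma)}$, together with a corresponding spanning set $E_y$.

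Second, we use compactness to thicken this. The union of the $d_{m_y}$-balls of radius $\varepsilon$ around $E_y$ is an open neighborhood of $p^{-1}(y)$. Because $p$ is a closed map ($M$ is compact), it contains $p^{-1}(W_y)$ for some open $W_y\ni y$. Cover $N$ by finitely many sets $W_{y_1},\dots,W_{y_k}$ with Lebesgue number $\eta$, and let $m=\max m_{y_i}$.

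Third, take $F$ to be an $(n,\eta)$-spanning set for $f_N$. For each point $z\in F$ and each time $t<n$, the point $f_N^t z$ lies in some $W_{y_i}$. Greedily cut $[0,n)$ into blocks of lengths $m_{y_i}$ and build, over each orbit segment of $z$, a concatenated collection of points of $M$. Each $x\in M$ whose image orbit is $\eta$-shadowed by $z$ is then $2\varepsilon$-shadowed, block by block, by a concatenation of elements of the $E_{y_i}$.

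Fourth, count. The number of concatenations is at most $e^{n(a+\gamma)}$ times a factor $C^{m}$ accounting for the final incomplete block; the step below explains why this factor is harmless. This gives
\[
r_n(f_M,4\varepsilon)\le r_n(f_N,\eta)\,C_\varepsilon\,e^{n(a+\gamma)}.
\]

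Fifth, take $\frac1n\log$, let $n\to\infty$, then $\gamma\to 0$ and $\varepsilon\to 0$.

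The main obstacle is the bookkeeping in the third step. Concatenated blocks do not give genuine points of $M$: we need a covering of the set of $x$ by products of Bowen balls, not actual orbits. So the counting must be carried out with open covers. Cover $M$ by sets of the form $\bigcap_j f_M^{-t_j}B_{m_j}(e_j,\varepsilon)$ and bound their number. The final incomplete block contributes a factor bounded independently of $n$, namely at most $r_m(f_M,\varepsilon)$. This is what makes $C_\varepsilon$ harmless in the limit.
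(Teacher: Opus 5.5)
Your proposal is correct and takes essentially the same route as the source: the paper gives no proof of its own but quotes Farb--Looijenga, and the two inequalities are the standard factor inequality (your push-forward of spanning sets) and Bowen's fiber-entropy inequality (your block-concatenation cover by sets $\bigcap_j f_M^{-t_j}B_{m_j}(e_j,\varepsilon)$). One slip is easily fixed: for the block structure to depend on $z$ alone, the whole ball around $f_N^t z$ must lie in a single $W_{y_i}$, so take $F$ to be $(n,\eta/3)$-spanning rather than $(n,\eta)$-spanning; also note that the argument, like the statement itself, implicitly assumes $M$ and $N$ are metrizable.
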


And the main result of this section is:

\begin{theorem}
    The map $f_T$ has topological entropy: $$h_{top}(f_T)=2\sum_{\substack{\lambda\in\operatorname{Spec}T \\ {|\lambda|>1}}}\log |\lambda|.$$
\end{theorem}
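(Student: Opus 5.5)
Our plan is to compute $h_{top}(f_T)$ by passing through the intermediate spaces of the construction and applying Lemma \ref{quotients} at each stage. For the linear automorphism of the 7-torus $\mathbb{A}$ given by $(z,x)\mapsto(Tz,x)$ we use the classical formula for toral endomorphisms: the entropy is the sum of $\log|\mu|$ over the eigenvalues $\mu$ of the real linear map with $|\mu|>1$. As a real linear map on $\mathbb{C}^3\cong\mathbb{R}^6$ (plus the trivial $x$-direction), $T$ acts on the $s$- and $t$-coordinates separately. Hence each eigenvalue of $T$ appears twice, and the entropy on $\mathbb{A}$ is $2\sum_{|\lambda|>1}\log|\lambda|$.

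Next we pass to $\mathbb{A}/\Gamma$. The projection $\mathbb{A}\to\mathbb{A}/\Gamma$ is surjective with finite fibers, and $T$ commutes with $\Gamma$. Fibers are finite, so each fiber contributes zero entropy, and Lemma \ref{quotients} shows that the induced map on $\mathbb{A}/\Gamma$ has the same entropy. We then relate $M$ to $\mathbb{A}/\Gamma$ via the collapse map $p:M\to\mathbb{A}/\Gamma$. This map contracts the resolved regions (the collars $[0,1]\times S^3\times T^3$ together with the exceptional $\mathbb{CP}^1$-bundles) onto the images of the tori $T_i$, and it intertwines $f_T$ with the induced linear map. Commutativity of the diagram needs a little care: the collar must be sent to the torus via the projection $(r,\sigma,p)\mapsto p$. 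On the $T^3$ factor $f_{Y'}$ acts by $p\mapsto Tp$, which matches the linear map restricted to $T_i$, since $T$ preserves each $T_i$ as an $SL(3,\mathbb{Z})$ action on $\mathbb{R}^3/\mathbb{Z}^3$. Lemma \ref{quotients} then gives the lower bound $h_{top}(f_T)\ge h_{top}$ of the map on $\mathbb{A}/\Gamma$, which equals $2\sum\log|\lambda|$.

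For the upper bound we need $\sup_n h_{top}(f_T,p^{-1}(n))=0$, and this is the main obstacle. Over points outside the tori the fibers are single points. Over a point of $T_i$ the fiber is a compact set $F_n$ built from the collar interval, $S^3$ and its Hopf quotient, modulo the finite group. Since $p^{-1}(n)$ is not invariant, we will follow Bowen's definition: we must show that $(n,\epsilon)$-separated subsets of $F_n$ grow subexponentially. On the fiber, $f_T$ acts in the normal directions by the maps $\gamma(r)\oplus\mathrm{id}$ on $S^3\subset\mathbb{C}^2$ and their projectivizations. Its behavior along orbits is governed by the linear map $T\oplus\mathrm{id}$ acting on spheres and projective spaces, where the constant part $T$ sits on the $r$ near $0$ end. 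We would argue that each fiber map is the restriction of a projective/spherical action of a linear map, and that such actions on $S^3$ or $\mathbb{CP}^1$ have zero topological entropy. The standard argument is that orbits converge to eigenspaces, with polynomial distortion. Because $r$ is preserved, the $r$-coordinate contributes no growth. The monotonicity condition on the eigenvalues of $\gamma(t)$, combined with uniform continuity in $r$, should ensure the separated sets grow at most polynomially, uniformly over the fiber. A cleaner route, which we would try first, is to bound the fiber entropy by the entropy of the skew-product $(r,\sigma)\mapsto(r,\gamma(r)\sigma)$ on $[0,1]\times S^3$. This is a continuous family of projective linear maps, and we would cite the fact (used in \cite{FL}) that such families have zero entropy. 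Combining the two bounds yields equality.
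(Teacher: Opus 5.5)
Your argument is correct, but it is organized differently from the paper's. The paper never uses a global collapse $M\to\mathbb{A}/\Gamma$. It works on the cover $\tilde M$ and splits it into $f$-invariant pieces. First it computes $h_Y=h_0$ via $Y\to\mathbb{A}$, whose nontrivial fibers are single spheres $S^3$ of rays. It then bounds the entropy on each collar by $\frac12 h_0$ via the projection $[0,1]\times S^3\times T^3\to[0,1]$, and notes that Hopf-collapsing the collars cannot increase entropy. Finally it takes the maximum over the pieces and descends along the finite map $\tilde M\to M$.

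You instead compose all the collapses into one semiconjugacy $p\colon M\to\mathbb{A}/\Gamma$, so the $T^3$-dynamics in the collars is absorbed into the base. A single application of Lemma~\ref{quotients} then reduces everything to the vanishing of Bowen entropy on the fibers $p^{-1}(n)$. Each such fiber is (a finite quotient of) $D=([0,1]\times S^3)/\!\sim$, with the Hopf collapse at $r=1$. This gives shorter bookkeeping and no separate collar estimate, but it relies on two facts you only sketch:
\begin{enumerate}
\item Since $f_{Y'}$ on a collar is literally the product $F\times T$ with $F(r,\sigma)=(r,(\gamma(r)\oplus\mathrm{id})\sigma)$, a product metric shows that $(n,\epsilon)$-separated sets in $D\times\{q\}$ are exactly $F$-separated sets in $D$. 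Hence $h(f_T,p^{-1}(n))\le h(F)$, and the finite quotients do not increase this.
\item $h(F)=0$. This follows from Lemma~\ref{quotients} applied to $[0,1]\times S^3\to[0,1]$: the base map is the identity, and each slice carries the ray action of $\gamma(r)\oplus\mathrm{id}$, which has zero entropy by \cite[Proposition 3.6]{FL}. This is precisely the step the paper uses for its collar bound.
\end{enumerate}
The monotonicity of the eigenvalues of $\gamma$ and any ``polynomial distortion'' estimate are not needed. Also, the only $\mathbb{CP}^1$ that actually occurs is at $r=1$, where $\gamma(1)=\mathrm{id}$, so there is no projectivized action to analyze there.
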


\begin{remark}
    We imitate very closely \cite[Section 3.2]{FL}.
\end{remark}

\begin{proof}

Following \cite[Section 3.2]{FL}, we note that the linear transformation $(T, t)$ of $\mathbb{A}$ has entropy $$h_0=2\sum_{|\lambda|>1}\log |\lambda|.$$ Now, $h_Y\geq h_0$ by the quotient map property of topological entropy. By Lemma \ref{quotients}, $$h_Y\leq h_{top}(T)+\sup_{a\in\mathbb{A}} h_{top}(f_Y, \pi^{-1}(a)),$$ where $\pi:Y\to\mathbb{A}$ is the projection. Choose $a \in \mathbb{A}$ located on one of the tori $T_i,$ such that $\pi^{-1}(a)$ is a 3-sphere $S^3.$

On a normal bundle of a fixed torus $NT_i\cong T^3\times \mathbb{C}^2,$ the transformation $T$ acts by $T\in SL(3,\mathbb{Z})$ on the torus $T^3$ and by $T\otimes\text{id}$ on $\mathbb{C}^2\cong\mathbb{R}^4.$ Hence, on a boundary component $S^3\times T^3\subset\partial Y,$ the action of the diffeomorphism $f_Y$ splits according to this product: it acts by $T\in SL(3,\mathbb{Z})$ on the torus $T^3$ and by $T\otimes\text{id}$ on the space $S^3$ of rays in $\mathbb{R}^4.$ By \cite[Proposition 3.6]{FL}, the topological entropy of a linear transformation of $\mathbb{R}^n$ acting on the space of rays $S^{n-1}$ is zero. Since the subset $\pi^{-1}(a)$ projects onto a single point of the torus $T^3$ in this product, the relative entropy $h_{top}(f_Y, \pi^{-1}(a))$ must equal the entropy of $T\otimes\text{id}$ on the space $S^3$ of rays in $\mathbb{R}^4,$ so it must vanish.

Next, we compute $h_{Y'}.$ Since $f_{Y'}$ leaves invariant $Y\subset Y'$ and also each closed collar neighborhood $[0,1]\times S^3\times T^3$ of each boundary component of $Y',$ we have that: $$h_{Y'} = \max( h_Y, h_{[0,1]\times S^3\times T^3}).$$ Applying Lemma \ref{quotients} to the map $[0,1]\times S^3\times T^3\to[0,1],$ we see that $$h_{[0,1]\times S^3\times T^3}\leq \sup_{t\in [0,1]} h_{top}(\gamma(t):T^3\to T^3)=\frac12 h_0,$$ hence $h_{Y'} = h_Y = h_0.$

To compute $h_{\tilde{M}},$ note that $\tilde{M}$ is composed of the $f_{\tilde{M}}$-invariant subsets which are $Y$, and quotients of the collars $[0,1]\times S^3\times T^3$ by the Hopf collapse which is locally modeled on the Hopf map: $\{1\}\times S^3\times T^3\to \{1\}\times \mathbb{CP}^1\times T^3.$ Hence, the first inequality in Lemma \ref{quotients} tells us that the entropy of $f_{\tilde{M}}$ restricted to the quotients of the collars is bounded above by $\frac12 h_0.$ Since the entropy of $f_{\tilde{M}}|_Y$ is $h_0\geq \frac12 h_0,$ we must have that $h_{\tilde{M}} = h_0.$ Since $\tilde{M}\to M$ is a finite map, $h_{\tilde{M}}=h_{top}(f_T),$ therefore $h_{top}(f_T) = h_0$ as desired.
\end{proof}

\begin{proof}[Proof of (1) from Theorem 1.1]
From the computation of the spectral radius and topological entropy of $f_T,$ it follows that whenever there is exactly 1 eigenvalue of $T$ that is greater than 1 in magnitude, $$h_{top}(f_T) = \log\text{spec~rad}(f_T)_*,$$ so by Yomdin's proof of the Shub Entropy Conjecture for $C^\infty$ diffeomorphisms, we have that $f_T$ is a diffeomorphism that minimizes topological entropy in its homology class. There are clearly infinitely many such matrices in $SL(3, \mathbb{Z})$, with infinitely many different spectral radii, thereby sufficing for the proof of Theorem 1.
\end{proof} 

\section{Global Deformations of the $G_2$-structure on $M$}

In general, it is an interesting and open question to determine the topology of the Teichm\"uller space $\mathcal{T}(M)$. Starting with a $G_2$-structure $\widetilde\varphi(t),$ $t>0,$ on $M$ constructed by Joyce \cite[Example 7]{Joyce2}, the pullbacks $f_i^*\widetilde\varphi(t)$, for each $f_i$ in Theorem 1.1, represent different elements of the space $\mathcal{T}(M),$ and we may ask if they are in the same connected component of this space. This section will answer this question in the affirmative by proving Part (3) of Theorem 1.1.

\subsection{Joyce's construction}
We will briefly review Joyce's construction of a 1-parameter family of $G_2$ metrics on $M$. Specify $r$ to be the radius of the neighborhoods $\nu_i(T_i).$ Define, as above, $Y := \mathbb{A}/\Gamma - \nu_i(T_i),$ and let $C_1, C_2\subset M$ be the cylinders obtained by deleting $T_i$ from $\nu_i(T_i),$ so $C_1, C_2$ are isometric to a punctured neighborhood of $(\mathbb{C}^2 - \{0\})/\{\pm 1\}\times T^3$ and conformally diffeomorphic to $(0, 1]\times \mathbb{RP}^3\times T^3.$

Also, let $\Omega^+_M$ denote the subset of $\Omega^3_M$ consisting of forms which are pointwise isomorphic to the standard $G_2$-form. We define a map $\Theta:\Omega^+_M\to \Omega^4_M$ which takes $\varphi$ to $*\varphi,$ where the Hodge star is given with respect to the metric defined by the $G_2$-structure $\varphi.$ A $G_2$-structure $\varphi$ being torsion free is equivalent to $d\varphi = d\Theta(\varphi)=0.$

For $t>0$ chosen sufficiently small, Joyce constructs a torsion-free $G_2$-structure $\widetilde\varphi(t)$ on $M$, and we review this construction. He first defines $\varphi(t)$ as follows. In $M - \bigcup_i T_i,$ he defines $$\varphi'(t) := (dx_1\wedge dy_1+dx_2\wedge dy_2+dx_3\wedge dy_3)\wedge dx+\text{Im}(dz_1\wedge dz_2\wedge dz_3),$$ and he defines a 4-form $v'(t) = *\varphi'(t).$ 

On the cylinders $C_i,$ in the dual coordinates $\delta_1, \delta_2, \delta_3$ on $T^3$ and $z_1, z_2$ on $\mathbb{C}^2 - \{0\}$, this form has the expression $$\varphi'(t) = \sum_{j=1}^3\hat\omega_j\wedge\delta_j + \delta_1\wedge\delta_2\wedge\delta_3,$$ where $\hat{\omega_j}, j=1,2,3$ is a hyper-K\"ahler triple of the Euclidean metric on $\mathbb{C}^2.$

Let $X$ be the (complex) blow-up of $\mathbb{C}^2/\{\pm 1\}$ at the origin, with desingularizing map $\phi:X\to \mathbb{C}^2/\{\pm 1\}.$ Let $X_r = \phi^{-1}(B_r(0)/\{\pm1\}).$ Then, there is a surjective, proper map $\phi_i:X_r\times T^3\to C_i$ which composes $\phi\times \text{id}_{T^3}$ with the isometric coordinate chart $B_r(0)-\{0\}/\{\pm1\}\times T^3\to C_i.$ The manifold $M$, we recall, is constructed by gluing a copy of $X_r\times T^3$ to each $C_i$ via the map $\phi_i.$ Let $B_i = C_i\cup_{\phi_i} X_r.$ Define forms $\omega_i$ on $X$ as follows. Let $\omega_2, \omega_3$ be defined such that $\omega_2+i\omega_3 = dz_1\wedge dz_2$. Define a non-increasing cut-off function $\tau:[0,r^2]\to [0,1]$ such that $\tau(u) = 1$ when $u\leq r^2/4$ and $\tau(u) = 0$ when $u\geq r^2/2.$ Then, where $u$ is the squared radius function on $X_r,$ we define, for each $t>0,$ a function $f_t:X_r\to\mathbb{C}$ as follows: $$f_t := \sqrt{u^2+\tau^2(u)t^4} + \tau(u)t^2\log\left(\frac{u}{\sqrt{u^2+\tau^2(u)t^4}+\tau(u)t^2}\right).$$ Let $\omega_1(t) := \frac12 i\partial\overline{\partial}f_t.$ Then, for $u>r^2/2$, $\omega_j =\hat\omega_j,$ and for $u<r^2/4,$ the triple $\omega_j(t), j=1,2,3$ is the hyper-K\"ahler triple of the Eguchi-Hanson space. Now, define on $X_r\times T^3$: $$\varphi''(t) := \sum_{j=1}^3\omega_j(t)\wedge\delta_j + \delta_1\wedge\delta_2\wedge\delta_3, ~ v''(t) = \sum_{j=1}^3\omega_j(t)\wedge(*_{T^3}\delta_j)+\frac12 \omega_1(t)\wedge\omega_1(t).$$

The form $\varphi(t)\in\Omega^3_M$ defined by $\varphi'$ on $Y$ and $\varphi''$ on $B_i\cong X_r\times T^3$ is well-defined since they agree near the boundaries of $B_i$ and $Y.$ The form $v(t)$ defined analogously is a 4-form that approximates $\Theta(\varphi(t)).$ Joyce constructs his torsion-free $G_2$ structure $\widetilde\varphi(t)$ as a perturbation of $\varphi(t)$.

\subsection{A path of closed forms}

Let $T\in SL(3, \mathbb{Z})$ and let $f: M\to M$ be the diffeomorphism associated to $T$ by Section 2. We wish to describe $f^*\widetilde\varphi(t)$ as a gluing construction. First, for each sufficiently small $t>0,$ we will define a path $\varphi_s(t):[0,1]_s\to \Omega^3_M$ such that $\varphi_0(t) = \varphi(t)$ and $\varphi_1(t) = f^*\varphi(t).$ Like Joyce, we will also need to define a path $v_s(t):[0,1]_s\to\Omega^4_M$ which will approximate $\Theta(\varphi_s(t))$.

Note that on $Y$ we have that $f^*\varphi_1(t) = T^*\varphi(t).$ For each fixed $s\in[0,1],$ the matrix $\gamma(1-s)\in SL(3;\mathbb{R})$ does not generally descend to a well-defined map $\mathbb{A}/\Gamma\to\mathbb{A}/\Gamma.$ However, if $\alpha \in \Omega^p_\mathbb{A}$ is a $\Gamma$-invariant form whose lift $\widetilde{\alpha}$ to the universal cover of $\mathbb{A}$ is translation invariant, then $\gamma(1-s)^*\widetilde{\alpha}$ is also translation invariant, so it descends to a form $\gamma(1-s)^*\alpha\in \Omega^p_\mathbb{A},$ which is $\Gamma$-invariant because $\Gamma$ commutes with $\gamma(1-s).$ In particular, the forms: $$\varphi_{s}'(t) := {\left(\gamma(1-s)\right)^*}(\varphi(t)),~ v_s'(t) := {\left(\gamma(1-s)\right)^*}(v(t))$$ on $\mathbb{A}/\Gamma$ are well-defined, as they come from translation-invariant forms on the universal cover of $\mathbb{A}.$ Moreover, the forms $\varphi_s'(t)$ and $v_s'(t)$ are closed and satisfy $\Theta(\varphi_s'(t))=v_s'(t),$ since these are local statements and they hold true on the universal cover of $\mathbb{A}$.

Recall that each $T\in SL(3; \mathbb{R})$ acts on $\mathbb{RP}^3$ by $T\oplus \text{id}_\mathbb{R}$ acting on lines in $\mathbb{R}^4.$ On each collar $C_i\cong (B_r(0)/\{\pm1\})\times T^3,$ the form $\varphi_{s}'(t)$ has the expression: $$\sum_{j=1}^3\gamma(1-s)^*\hat\omega_j(t)\wedge\gamma(1-s)^*(\delta_j) + \gamma(1-s)^*(\delta_1\wedge\delta_2\wedge\delta_3).$$ Define a non-decreasing cut-off function $\sigma:[0,r]\to[0,1]$ such that if $\rho^2<2r^2/3$ then $\sigma(\rho)=0$ and if $\rho^2>3r^2/4$ then $\sigma(\rho)=1.$ For each $s\in[0,1]$, let $\Psi_s:(B_r(0)-\{0\})/\{\pm1\}\to (B_r(0)-\{0\})/\{\pm1\}$ be defined in polar coordinates $(0,r]\times S^3/\{\pm1\}$ as $\Psi_s(\rho, \alpha) = (\rho, \gamma(1-s\sigma(\rho))\alpha).$ Then, upon applying an isotopy to $f$ if necessary, the form $f^*\varphi'$ has the expression on $C_i$: $$f^*\varphi'=\sum_{j=1}^3\Psi_1^*\hat\omega_j(t)\wedge T^*(\delta_j) + T^*(\delta_1\wedge\delta_2\wedge\delta_3).$$ Define $\varphi''_s(t)$ on $X_r\times T^3$ to be: $$\varphi_s''(t):=\sum_{j=1}^3\Psi_s^*\omega_j(t)\wedge \gamma(1-s)^*(\delta_j) + \gamma(1-s)^*(\delta_1\wedge\delta_2\wedge\delta_3).$$ Similarly, define $v_s''(t)$ on $X_r\times T^3$ to be: $$v_s''(t) = \sum_{j=1}^3 \Psi_s^*\omega_j(t)\wedge\gamma(1-s)^*(*_{T^3}\delta_j)+\frac12 \Psi_s^*\omega_1(t)\wedge\Psi_s^*\omega_1(t).$$For each $s\in[0,1]$ and $t>0$ sufficiently small, we now define $\varphi_s(t)\in \Omega^3_M$ as before, to be defined by $\varphi'_s$ on $Y$ and $\varphi''_s$ on each $B_i\cong X_r\times T^3$, and similarly define $v_s(t)$. Again, they are well-defined since $\varphi_s'(t)$ and $\varphi_s''(t)$ (respectively $v_s'(t)$ and $v_s''(t)$) agree near the boundaries of $B_i$ and $Y.$ Moreover, it is clear from the local expressions that $\varphi_s(t)$ is closed for each fixed $s, t,$ and is a $G_2$-structure whenever $t$ is sufficiently small. Indeed, this latter claim amounts to checking in the local expressions on $X_r\times T^3$ that the quadratic form $(v, w)\mapsto \iota_v\varphi_s(t)\wedge\iota_w\varphi_s(t)\wedge\varphi_s(t)$ is non-degenerate, which follows easily once we observe that under this form the splitting $X_r\times T^3$ is orthogonal.

Define $\psi_s(t) = *\left(\Theta(\varphi_s(t)) - v_s(t)\right)$ where we take the Hodge star relative to the metric on $M$ defined by $\varphi_s(t).$

\subsection{Gluing in families}

This section will produce a family $\widetilde{\varphi}_s(t)$ of torsion-free $G_2$-structures which is well-approximated by $\varphi_s(t).$ Our main tool will be a families version of Joyce's gluing results \cite[Theorems 11.6.1, G1, G2]{Joycebook} due to Crowley, Goette, and Hertl, which we state only in its untwisted case:

\begin{theorem}[special case of {\cite[Theorem 6.2]{CGH}}]
    Suppose that for each sufficiently small $t>0,$ the form $\varphi_s(t) \in \Omega^3_M$ is a continuous family of not necessarily torsion-free $G_2$ structure parameterized by a compact space $B, s\in B.$ Let $g_s(t)$ be the corresponding Riemannian metric. Suppose moreover that there exists $\psi_s(t) \in \Omega^3_M$ also depending continuously on $s\in B$, such that $d^*\psi_s(t) = d\Theta(\varphi_s(t)),$ and there exist constants $A_1, A_2,$ and $A_3$ such that the following 5 estimates hold independently of $s\in B$: \begin{enumerate}
        \item $||\psi_s(t)||_{L^2}\leq A_1 t^4$
        \item $||\psi_s(t)||_{C^0}\leq A_1 t^{1/2}$
        \item $||d^*\psi_s(t)||_{L^{14}}\leq A_1$
        \item The injectivity radius of $g_s(t)$ is bounded below by $A_2t.$
        \item The $C^0$-norm of the Riemann curvature of $g_s(t)$ is bounded above by $A_3t^{-2}.$ 
    \end{enumerate}
    Then, for all sufficiently small $t$, there exists a constant $K$ and $\widetilde{\varphi}_s(t)$ a torsion-free $G_2$-structure satisfying the following: \begin{enumerate}
        \item $\widetilde{\varphi}_s(t)$ depends continuously on $s.$
        \item $\widetilde{\varphi}_s(t)$ is cohomologous to $\varphi_s(t).$
        \item $||\widetilde{\varphi}_s(t) - \varphi_s(t)||_{C^0}\leq Kt^{1/2}.$
    \end{enumerate}
\end{theorem}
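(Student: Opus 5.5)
The plan is to run Joyce's proof of \cite[Theorem 11.6.1]{Joycebook} with every constant tracked uniformly in $s\in B$, and then add a continuity argument. Fix $s$ and look for $\widetilde\varphi_s(t)=\varphi_s(t)+d\eta_s(t)$ with $\eta_s(t)\in\Omega^2_M$. This choice gives conclusion (2) automatically. Following Joyce, torsion-freeness reduces to the nonlinear elliptic equation
$$\Delta_{g_s(t)}\eta = d^*\psi_s(t) + d^*\bigl(F_s(d\eta)\bigr),$$
where $F_s$ is the quadratic remainder in the Taylor expansion of $\Theta$ at $\varphi_s(t)$. The form of $F_s$ is pointwise universal, so it obeys $|F_s(a)-F_s(b)|\le C|a-b|(|a|+|b|)$ with $C$ independent of $s$. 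We solve this by Joyce's iteration $\eta_{j+1}=\Delta^{-1}\bigl(d^*\psi_s+d^*F_s(d\eta_j)\bigr)$. The inverse is taken on the $L^2$-orthogonal complement of the $g_s(t)$-harmonic forms; the right-hand side is exact, so it lies there.

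The first key step is to show that every analytic constant in Joyce's argument depends only on $A_1,A_2,A_3$. These constants enter through an $L^2$ bound of the form $\|d\eta\|_{L^2}\le\|\psi\|_{L^2}$ and through elliptic and Sobolev estimates (the $L^{14}_1\hookrightarrow C^0$ embedding and interior $L^{14}_2$ estimates for $\Delta$).

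For the $L^2$ bound we integrate by parts against $\eta$; this uses only hypothesis (1) and the pointwise bound on $F_s$.

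The Sobolev and elliptic constants are handled by rescaling $g_s(t)$ by $t^{-2}$. By hypotheses (4) and (5), the rescaled metric has injectivity radius at least $A_2$ and curvature at most $A_3$. Harmonic coordinates on balls of fixed radius then exist, with $C^{1,\alpha}$ control depending only on these two bounds. So the local elliptic and Sobolev constants are uniform in $s$, and rescaling back produces exactly the powers of $t$ in Joyce's estimates.

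With this uniformity, Joyce's inductive bounds on $\|d\eta_j\|_{L^2}$, $\|d\eta_j\|_{C^0}$ and $\|\nabla d\eta_j\|_{L^{14}}$ hold with $s$-independent constants. We obtain a uniform $t_0$ such that, for $t<t_0$, the iteration is a contraction on a ball in a suitable Banach space $\mathcal{X}$ of $2$-forms. Its radius is chosen so that the limit satisfies $\|d\eta_s\|_{C^0}\le Kt^{1/2}$, which is conclusion (3), with the same $K$ for every $s$.

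The second step, and the one I expect to be the main obstacle, is continuity in $s$ (conclusion (1)). The operators $\Delta_{g_s(t)}^{-1}$ and the harmonic projections act on different complements as $s$ varies. To avoid this, I would reformulate the problem so that the contraction map $\Phi_s$ acts on a fixed space. For example, work with $\eta$ in the image of $d^*_{g_s}$, or pull the problem back to the fixed space $L^{14}_2$ modulo the harmonic forms of $g_{s_0}$, using the continuity of the harmonic projection in the metric. The latter continuity follows because $b^2$ is constant, so the spectral gap stays bounded below uniformly by the same rescaling argument.

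It then suffices to check three things. First, $s\mapsto\Phi_s$ is continuous in the operator sense; this holds because $\varphi_s$, $\psi_s$ and $g_s$ depend continuously on $s$ and $\Delta_{g_s}^{-1}$ depends continuously on $g_s$ given the uniform gap. Second, each $\Phi_s$ maps the same closed ball to itself. Third, each $\Phi_s$ contracts that ball with a uniform constant $\kappa<1$. The standard parametrized contraction mapping principle then shows that the fixed point $\eta_s$, and hence $\widetilde\varphi_s(t)$, depends continuously on $s$.

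Alternatively, continuity follows from uniqueness. Torsion-free structures near $\varphi_s$ in a fixed cohomology class are unique up to diffeomorphism and form a smooth family (Joyce's local Teichmüller theory). A limit of solutions $\eta_{s_n}$ is, by the uniform estimates and Arzelà–Ascoli, a solution for $s$ in the ball, so it must equal $\eta_s$.
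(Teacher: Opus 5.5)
Your proposal matches the argument the paper relies on. The paper gives no proof of its own: it defers to \cite[Theorem 6.2]{CGH}, which it describes as the same iteration as in \cite[Theorem 11.6.1]{Joycebook} run with constants uniform in $s$, and it remarks only that (2) holds because the solution is built as $\varphi_s(t)+d\eta_s(t)$, which is exactly your ansatz. Two small corrections: the right-hand side $d^*(\cdots)$ is coexact, not exact (it is still orthogonal to the harmonic forms), and in your alternative continuity argument the limit must be identified with $\eta_s$ through uniqueness of the fixed point in the ball, because uniqueness only up to diffeomorphism would not give $\eta=\eta_s$.
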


\begin{remark}
    That $\widetilde{\varphi}_s(t)$ is cohomologous to $\varphi_s(t)$ is not stated explicitly in \cite{CGH} but follows immediately from their construction of $\widetilde{\varphi}_s(t)$ as $\varphi_s(t)+d\eta_s(t)$ for a particular choice of $\eta_s(t)\in\Omega^2_M.$
\end{remark}

We are now ready to prove the main theorem of this section:

\begin{theorem}
    For all sufficiently small $t>0,$ there exists a continuous path of torsion-free $G_2$ structures $\widetilde{\varphi}_s(t)\in\Omega^3_M, s\in[0,1]$ such that $\widetilde{\varphi}_0(t) = \widetilde{\varphi}(t)$ and $\widetilde{\varphi}_1(t) = f^*\widetilde{\varphi}(t).$
\end{theorem}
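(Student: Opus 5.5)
The plan is to apply the families gluing theorem above (the special case of \cite[Theorem 6.2]{CGH}) with parameter space $B=[0,1]_s$. We feed it the path of closed $G_2$-structures $\varphi_s(t)$ and the error forms $\psi_s(t)=*\left(\Theta(\varphi_s(t))-v_s(t)\right)$ constructed in the previous subsection. Two things then remain: the five estimates, uniformly in $s$, and the endpoints.

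\textbf{Where the error lives.} The first step is to check that $d^*\psi_s(t)=d\Theta(\varphi_s(t))$. This follows because $v_s(t)$ is closed: on $Y$ it is a pullback of a translation-invariant closed form, and on $X_r\times T^3$ it is built from closed forms. The next step is to locate the support of $\psi_s(t)$, which I expect to be the main point.

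On $Y$ we have $\Theta(\varphi'_s)=v'_s$, so $\psi_s$ vanishes there. On $X_r\times T^3$, in the region $u\geq r^2/2$ we have $\tau=0$, so $\omega_j(t)=\hat\omega_j$. Hence
$$\varphi''_s(t)=(\Psi_s\times\gamma(1-s))^*\Big(\sum_j\hat\omega_j\wedge\delta_j+\delta_1\wedge\delta_2\wedge\delta_3\Big),$$
which is the pullback of the flat torsion-free structure by a diffeomorphism. The same holds for $v''_s(t)$ with the flat $4$-form. Since $\Theta$ is natural under diffeomorphisms, $\psi_s(t)=0$ there as well, even inside the transition region of $\sigma$.

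In the region $u<r^2/2$ we have $\rho^2<2r^2/3$, so $\sigma=0$ and $\Psi_s=\mathrm{id}$. There $\varphi''_s(t)$ is exactly Joyce's form $\varphi''(t)$, except that the flat coframe $\delta_j$ of $T^3$ is replaced by the constant coframe $\gamma(1-s)^*\delta_j$. Equivalently, it is Joyce's model on $X_r\times T^3$ with the torus carrying a different flat metric.

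\textbf{The five estimates.} Joyce's proofs of (1)–(5) for this model \cite[Example 7]{Joyce2}, \cite[Ch.~11]{Joycebook} only use the Eguchi–Hanson asymptotics $|\omega_j(t)-\hat\omega_j|=O(t^4u^{-2})$ on the annulus $r^2/4\le u\le r^2/2$ and the bounded geometry of the flat torus factor. The resulting constants depend continuously on the flat metric on $T^3$ and on the metric on $Y$. Both of these vary through the compact set $\gamma([0,1])\subset SL(3,\mathbb{R})$, so we obtain constants $A_1,A_2,A_3$ uniform in $s$. For (4) and (5), the injectivity radius $\gtrsim t$ and the curvature $\lesssim t^{-2}$ come from the Eguchi–Hanson neck, which is independent of $s$. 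The remaining pieces are flat or pulled back from flat structures by diffeomorphisms in a compact family, so they are uniformly controlled. Granting the estimates, the families theorem gives a continuous path $\widetilde\varphi_s(t)$ of torsion-free structures for all small $t$.

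\textbf{Endpoints.} At $s=0$ we have $\varphi_0(t)=\varphi(t)$. The CGH construction, like Joyce's, produces $\widetilde\varphi_s=\varphi_s+d\eta_s$ as the unique solution of the same fixed-point problem in a small ball. So $\widetilde\varphi_0(t)=\widetilde\varphi(t)$, and by naturality of that fixed-point problem under diffeomorphisms, the solution attached to $f^*\varphi(t)$ is $f^*\widetilde\varphi(t)$. If one prefers not to rely on this uniqueness, Joyce's local uniqueness suffices instead: a torsion-free structure that is cohomologous to $\varphi_s$ and $C^0$-close to it (within $Kt^{1/2}$) is the one he constructs. Finally, $\varphi_1(t)$ agrees with $f'^*\varphi(t)$, where $f'$ is the isotoped representative of $f$ used above. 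Pulling $\widetilde\varphi(t)$ back along the isotopy from $f'$ to $f$ is a path of torsion-free structures from $f'^*\widetilde\varphi(t)$ to $f^*\widetilde\varphi(t)$, and concatenating it with $\widetilde\varphi_s(t)$ gives the desired path.
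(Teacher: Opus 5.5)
Your proposal is correct and follows the paper's route. You apply the CGH families gluing theorem to $\varphi_s(t)$ and get the five estimates uniformly in $s$ from Joyce's $s=0$ model. Your observation that $\psi_s(t)$ is supported where $\Psi_s=\mathrm{id}$, so that only the flat metric on $T^3$ varies, is a tidier version of the paper's transport of $\psi_0(t)$ by $\Psi_s\times\gamma(1-s)$. You then identify the endpoints by the diffeomorphism-invariance of the Joyce/CGH fixed-point construction, and your explicit concatenation along the isotopy from $f'$ to $f$ makes precise what the paper leaves as ``upon applying an isotopy to $f$.'' The optional fallback via ``local uniqueness'' is not needed and overclaims as stated: Joyce's deformation theory identifies nearby cohomologous torsion-free structures only up to diffeomorphisms isotopic to the identity, and not under a mere $C^0$ bound of size $t^{1/2}$. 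So rely on the naturality argument.
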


\begin{proof}
    We will first apply the families gluing theorem to our family $\varphi_s(t)$ constructed in the previous section. To do so, we must prove the five estimates in that theorem. Note that $\psi_s(t)$ is supported entirely in the annulus $\frac{r^2}4\leq\rho^2\leq\frac{3r^2}{4}$ within $X_r\times T^3.$ In this annulus, $\psi_0(t)$ is translation-invariant in the $T^3$ direction, so we may express: $\psi_s(t) = (\Psi_s^*\oplus\gamma(1-s)^*)(\psi_0(t))$ and the metric is obtained by applying $(\Psi_s^*\oplus\gamma(1-s)^*)$ to the metric at $s = 0.$ By \cite[Theorem 11.5.7]{Joycebook}, bounds (1)-(5) hold for $\psi_0(t).$ Since $\Psi_s$ and its derivatives, and $\gamma(1-s)^*$ are all bounded independently of $s$, as are their inverses, we must have bounds (1)-(5) in our families situation. The families gluing theorem therefore gives us $\widetilde{\varphi}_s(t)$ torsion-free. Moreover, $\widetilde{\varphi}_0(t) = \widetilde{\varphi}(t)$ since they are constructed in the identical way, according to the proofs of \cite[Theorem 6.2]{CGH} and \cite[Theorem 11.6.1]{Joycebook}.

    What's left to prove is that $\widetilde{\varphi}_1(t) = f^*\widetilde\varphi(t).$ Since evidently, $\psi_1(t) = f^*\psi_0(t)$, and the iteration procedure used to construct $\widetilde{\varphi}(t)$ in \cite[Theorem 11.6.1]{Joycebook} and \cite[Theorem 6.2]{CGH} is diffeomorphism invariant, we must have that $\widetilde{\varphi}_1(t) = f^*\widetilde\varphi(t),$ as desired.
\end{proof}

\section{Dynamics on $G_2$ Manifolds}

It is tempting to place these examples in a general theory of dynamical systems on $G_2$ manifolds, which may be analogous to dynamics on K\"ahler manifolds. In the latter theory, biholomorphic automorphisms often comprise a large, interesting class of dynamical systems to study. However, while many similarities exist between the cohomology of $G_2$ manifolds and that of K\"ahler manifolds, to the author's knowledge there is no natural $G_2$ analog of a biholomorphic diffeomorphism. The purpose of this section is to illustrate the failure of one such potential analog from producing interesting dynamics.

\subsection{A local formula}

Let $V = \mathbb{R}^7$ be the standard representation of $G_2$, with $\varphi_0\in\Lambda^3 V^*$ stabilized by $G_2$ and a $G_2$-invariant metric $\langle, \rangle$ (we normalize $\varphi_0$ such that $|\varphi_0|^2 = 7.$) Following Verbitsky in \cite{Verbitsky}, we define an operator $C:\Lambda^1V^*\to\Lambda^2V^*$ by $\alpha\mapsto \varphi_0(\alpha^\#, \cdot, \cdot),$ where $\#:V^*\to V$ is the musical isomorphism of the metric $\langle, \rangle.$ We may extend $C$ to a map $C:\Lambda^kV^*\to\Lambda^{k+1}V^*$ by enforcing the graded Leibniz rule.

\subsection{Harmonic forms on a $G_2$-manifold} 

The representation $\Lambda^3V^*$ of $G_2$ decomposes into irreducibles as follows, where the subscript indicates the dimension (see, for instance, \cite{Bryant} or \cite{Joyce1}): $$\Lambda^3V^* = \Lambda^3_1\oplus\Lambda^3_7\oplus\Lambda^3_{21},$$ where $\Lambda^3_1=\mathbb{R}\langle\varphi_0\rangle$ and $\Lambda^3_{21}$ consists of those forms $\alpha$ such that $$\alpha\wedge\varphi_0=\langle\alpha,\varphi_0\rangle = 0.$$ The Verbitsky operator $C$ splits according to this decomposition \cite[Claim 3.11, Remark 3.12]{Verbitsky}: on $\Lambda^3_1$ it acts by $C\varphi_0 = 2*\varphi_0,$ while on $\Lambda^3_{27}$ it acts by $C\alpha = -*\alpha.$ 

On a $G_2$-manifold $(M, \varphi),$ this splitting globalizes to a splitting of $\Omega^3_M=\Omega^3_1\oplus\Omega^3_7\oplus\Omega^3_{21}$ which in turn gives rise to a splitting of the harmonic forms $\mathcal{H}^3_M = \mathbb{R}\langle\varphi\rangle\oplus\mathcal{H}^3_7\oplus\mathcal{H}^3_{21}$. If $M$ is irreducible, then $\mathcal{H}^3_7 = 0$ \cite[Theorem 10.2.4]{Joycebook}. The map $C$ globalizes to a map $C:\Omega^3_M\to\Omega^{4}_M$ that preserves the subspace of harmonic forms. The local relationship between $C$ and the Hodge star operator immediately gives us the following proposition:

\begin{proposition}
    If $(M, \varphi)$ is a closed irreducible $G_2$-manifold, then the $G_2$ structure $\varphi$ determines a quadratic form $Q:H^3(M; \mathbb{R})\times H^3(M; \mathbb{R})\to\mathbb{R}$ with signature $(1, b_3-1)$ defined on the space of harmonic 3-forms by: $$Q(\alpha,\beta) = \int_M\alpha\wedge C(\beta).$$
\end{proposition}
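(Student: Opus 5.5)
We will use Hodge theory to identify $H^3(M;\mathbb{R})$ with the space $\mathcal{H}^3_M$ of $\varphi$-harmonic 3-forms. By the discussion above, since $M$ is closed and irreducible, this space splits orthogonally as $\mathcal{H}^3_M=\mathbb{R}\langle\varphi\rangle\oplus\mathcal{H}^3_{21}$, because $\mathcal{H}^3_7=0$. We then define $Q$ on harmonic representatives and evaluate it using the pointwise action of $C$ recorded from \cite{Verbitsky}:
\begin{itemize}
\item on $\Lambda^3_1$, $C$ acts by $C\varphi=2*\varphi$;
\item on $\Lambda^3_{27}\supset\Lambda^3_{21}$, $C$ acts by $C\alpha=-*\alpha$.
\end{itemize}
These relations are pointwise. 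Since the splitting of $\Omega^3_M$ is by parallel subbundles (the holonomy is contained in $G_2$), the type components of a harmonic form are again harmonic. So $C$ maps $\mathcal{H}^3_M$ to harmonic 4-forms, and we can write, for $\alpha=a\varphi+\alpha_{21}$ and $\beta=b\varphi+\beta_{21}$,
$$C\beta = 2b*\varphi - *\beta_{21}.$$

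Next we compute $Q(\alpha,\beta)=\int_M\alpha\wedge C\beta$ by expanding bilinearly. The cross terms are
$$\varphi\wedge *\beta_{21}=\langle\varphi,\beta_{21}\rangle\,\mathrm{vol} \quad\text{and}\quad \alpha_{21}\wedge *\varphi=\langle\alpha_{21},\varphi\rangle\,\mathrm{vol},$$
and both vanish pointwise because the decomposition is orthogonal. What remains is
$$Q(\alpha,\beta)=2ab\int_M|\varphi|^2\,\mathrm{vol}-\int_M\langle\alpha_{21},\beta_{21}\rangle\,\mathrm{vol}=14ab\,\mathrm{Vol}(M)-\langle\alpha_{21},\beta_{21}\rangle_{L^2}.$$
This is manifestly symmetric. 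It is positive definite on the line $\mathbb{R}\langle\varphi\rangle$ and negative definite on $\mathcal{H}^3_{21}$, which has dimension $b_3-1$. Hence $Q$ is nondegenerate with signature $(1,b_3-1)$.

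Finally, we check that $Q$ is well defined on cohomology, i.e.\ that it depends only on the classes. We could simply define $Q$ through harmonic representatives, which Hodge theory makes canonical once $\varphi$ (hence the metric) is fixed. Alternatively, the formula above shows $Q(\alpha,\beta)=\int\alpha\wedge 2*\pi_1\beta-\int\alpha\wedge *\pi_{21}\beta$, which is expressed via the $L^2$ product and the harmonic projections. Either way, $Q$ is determined by $\varphi$ alone.

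The main obstacle is making sure that the type components of harmonic forms are harmonic. This is what lets $C$ preserve harmonicity and what justifies the orthogonal decomposition of $\mathcal{H}^3_M$. We will take it from the stated globalized splitting, ultimately \cite[Theorem 10.2.4]{Joycebook}: the Hodge Laplacian commutes with projections onto parallel subbundles for a torsion-free $G_2$-structure. With that in hand, the rest is the pointwise computation above. We must also keep the normalization $|\varphi_0|^2=7$ in mind, but the sign of the constant $14$ is all that matters for the signature.
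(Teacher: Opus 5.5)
Your proof is correct and is the same argument the paper compresses into one line: the splitting $\mathcal{H}^3_M=\mathbb{R}\langle\varphi\rangle\oplus\mathcal{H}^3_{27}$ coming from $\mathcal{H}^3_7=0$, together with $C=2*$ on $\Lambda^3_1$ and $C=-*$ on $\Lambda^3_{27}$; you have spelled it out as $Q(\alpha,\beta)=14ab\,\mathrm{Vol}(M)-\langle\alpha_{21},\beta_{21}\rangle_{L^2}$ and justified the harmonicity of the type components. One notational remark: the paper's $\Lambda^3_{21}$ is a misprint for $\Lambda^3_{27}$ (its defining conditions $\alpha\wedge\varphi_0=\langle\alpha,\varphi_0\rangle=0$ cut out exactly $\Lambda^3_{27}$), so your inclusion $\Lambda^3_{27}\supset\Lambda^3_{21}$ is really an equality and nothing in your argument changes.
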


\subsection{Diffeomorphisms of $G_2$-Manifolds preserving $Q$} The form $Q$ is reminiscent of the intersection form $Q_X$ on $H^{1,1}(X),$ where $X$ is K3 surface, given that both forms have signature $(1, n-1).$ Biholomorphic automorphisms of $X$ preserve $Q_X$, and therefore act by hyperbolic isometries on each sheet of the hyperboloid $\{\alpha\in H^{1,1}(X) | Q_X(\alpha, \alpha) = 1\}$. Cantat classified biholomorphic automorphisms according to how they act on this hyperbolic space \cite{Cantat}. We would hope for an analogous classification of $Q$-preserving automorphisms of $M.$ Below we propose a dictionary between dynamically interesting features of $K3$ surfaces (or, in fact, algebraic surfaces in general) and those of irreducible $G_2$ manifolds:

\medskip
\bgroup
\def\arraystretch{1.25}
\begin{tabular}{ c | c }
$\mathbf{K3}$ \textbf{Surface} $(X, \omega)$ & \textbf{Irreducible} $\mathbf{G_2}$ \textbf{Manifold} $(M, \varphi)$ \\
\hline
K\"{a}hler form $\omega$ & $G_2$-form $\varphi$ \\
\hline
$H^{1,1}(X)$ & $H^3(M)$ \\
\hline
Intersection form on $H^{1,1}(X)$ & The form $Q$ on $H^3(M)$ \\
\hline
Complex structure & Verbitsky operator $C$ \\
\hline
Moduli space of marked K3's & Teichm\"uller space $\mathcal{T}(M)$ \\
\end{tabular}
\egroup
\medskip

This next proposition shows exactly which diffeomorphisms of $M$ preserve $Q$ locally. 

\begin{proposition}
    Let $(M,\varphi)$ be a closed, irreducible $G_2$ manifold. Suppose the diffeomorphism $f:M\to M$ satisfies $f^*\alpha\wedge C(f^*\beta) = f^*(\alpha\wedge C(\beta))$ for all $\alpha, \beta\in\Omega^3_1\oplus\Omega^3_{27}.$ Then, $f$ is an isometry.
\end{proposition}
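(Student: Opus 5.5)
The plan is to turn the global hypothesis into a pointwise statement about the differential $df_p$, and then to pin $df_p$ down using the structure of $C$ on the splitting of $\Lambda^3$. Write $\varphi' := f^*\varphi$, a $G_2$-structure with metric $g' = f^*g$. By naturality, $f^*(\alpha\wedge C(\beta)) = f^*\alpha\wedge C'(f^*\beta)$, where $C'$ is the Verbitsky operator built from $\varphi'$. The hypothesis then says that $f^*\alpha\wedge C(f^*\beta) = f^*\alpha\wedge C'(f^*\beta)$ for all $\alpha,\beta\in\Omega^3_1\oplus\Omega^3_{27}$ (taken with respect to $\varphi$). Both sides are pointwise expressions, and locally these forms span their bundles at each point. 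We therefore obtain an identity of bilinear forms on a $27+1$-dimensional subspace $W_p := (df_p)^*(\Lambda^3_1\oplus\Lambda^3_{27})\subset\Lambda^3T_p^*M$:
\[ \gamma\wedge C(\delta) = \gamma\wedge C'(\delta)\qquad \text{for all } \gamma,\delta\in W_p. \]

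Next I rewrite $C$ via the local formulas quoted above. For the form $\varphi$, the operator $C$ acts as $2*$ on $\Lambda^3_1$ and as $-*$ on $\Lambda^3_{27}$. Hence $\gamma\wedge C(\delta) = \langle \gamma, J\delta\rangle\,\mathrm{vol}_g$, where $J = 2\pi_1 - \pi_{27}$ on $\Lambda^3_1\oplus\Lambda^3_{27}$. This is a nondegenerate form of signature $(1,34)$. The analogous expression for $\varphi'$ uses $g'$, $\mathrm{vol}_{g'}$, and the splitting for $\varphi'$. Under $(df_p)^*$, the space $W_p$ is exactly the space $\Lambda^3_1\oplus\Lambda^3_{27}$ for $\varphi'$, so the $C'$-form is the $(df_p)^*$-pushforward of the standard one.

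The key linear-algebra step is to show the following. Suppose $A\in GL(V)$ is such that the pulled-back $\varphi_0$-quadratic form on $A^*(\Lambda^3_1\oplus\Lambda^3_{27})$ agrees with the $\varphi_0$-quadratic form restricted there. Then $A$ preserves $\varphi_0$ up to sign and scale, and the volume normalization forces $A\in G_2$ (or $A\in O(7)$ in the orientation-reversing case). I would first show that $A^*$ preserves the subspace $\Lambda^3_1\oplus\Lambda^3_{27}$, i.e.\ $A^*\Lambda^3_7=\Lambda^3_7$. The idea is that $\Lambda^3_7$ should be recoverable intrinsically, as an orthogonal complement of $W_p$ for the wedge pairing $\Lambda^3\times\Lambda^4\to\Lambda^7$ composed with $C$. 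The distinguished negative-free line $\Lambda^3_1$ is then the unique $G_2$-invariant positive direction. I would use the fact that the stabilizer of $\Lambda^3_7$ (equivalently, of the complementary space) in $GL(7)$ is $\mathbb{R}^*\cdot G_2$, since the decomposition of $\Lambda^3$ determines $\varphi_0$ up to scale. Matching the values $Q(\varphi_0,\varphi_0)$ then fixes the scale, giving $A^*g=g$.

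Applying this at every $p$ gives $f^*g=g$, so $f$ is an isometry. I expect the main obstacle to be the stabilizer argument. Equality of the two quadratic forms on the 28-dimensional space must be shown to force $A^*$ to preserve the splitting rather than merely to be some isometry of an indefinite form. To handle this, I will first try to exploit that the form is expressed through the Hodge star, so that $\gamma\wedge*\delta$ determines the conformal class of $g$ once restricted to a sufficiently large $GL$-generic subspace; this would show $A$ is conformal. After that, I will pin down the conformal factor by integrating, using $Q(\varphi,\varphi) = 14\,\mathrm{Vol}$ and the invariance of total volume under diffeomorphism.
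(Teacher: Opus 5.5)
You reduce the problem correctly: by naturality of $C$, the hypothesis says that $A=df_p$ (in $\varphi$-adapted frames) stabilizes the $\Lambda^7$-valued form $q(\alpha,\beta)=\alpha\wedge C\beta$ on $\Lambda^3_1\oplus\Lambda^3_{27}$. But you never prove the step that carries all the content, namely that such an $A$ is conformal, and the mechanisms you propose do not work.

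\begin{itemize}
\item The hypothesis constrains $\gamma\wedge C\delta$ only when $\gamma,\delta$ both lie in $W_p$. It says nothing about pairings of $W_p$ with vectors outside it. So you cannot recover $\Lambda^3_7$ as a $C$-orthogonal complement of $W_p$ and deduce $A^*\Lambda^3_7=\Lambda^3_7$.
\item Your formula $\gamma\wedge C\delta=\langle\gamma,J\delta\rangle\,\mathrm{vol}_g$ with $J=2\pi_1-\pi_{27}$ holds on the $\varphi$-splitting, not on $W_p$. Elements of $W_p$ have $\Lambda^3_7(\varphi)$-components, on which $C$ acts by a further multiple of $*$ that you have dropped.
\item Calling $\Lambda^3_1$ ``the unique $G_2$-invariant positive direction'' cannot locate $A^*\Lambda^3_1$. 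You do not know that $A$ normalizes $G_2$, and a form of signature $(1,27)$ (not $(1,34)$) has an open cone of positive lines.
\item The claimed stabilizer of $\Lambda^3_7$ is asserted without proof.
\item Your fallback, that ``$\gamma\wedge *\delta$ determines the conformal class on a sufficiently large generic subspace,'' just restates what must be shown; $W_p$ is not generic, and the form is not $\gamma\wedge*\delta$.
\end{itemize}

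The missing idea, which the paper supplies, is to linearize and use Schur's lemma. The stabilizer $H$ of $q$ in $SL(V)$ is real algebraic and contains $G_2$. Its Lie algebra is the kernel of a $G_2$-equivariant map $T$ on $\mathfrak{sl}(V)=\Sym^2_0V\oplus\mathfrak{so}(V)$. Since $\Sym^2_0V$ is $G_2$-irreducible and not isomorphic to any summand of $\mathfrak{so}(V)\cong\mathfrak{g}_2\oplus V$, proving $\ker T\subseteq\mathfrak{so}(V)$ reduces to exhibiting one explicit pair $X\in\Sym^2_0V$, $\eta\in\Lambda^3_{27}$ with $q(X\eta,\eta)+q(\eta,X\eta)\neq 0$. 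Then $H$ has finitely many components and a compact identity component, so it is compact. It therefore preserves an inner product, which must be the $G_2$-invariant one (unique up to scale). This gives conformality of $df_p$ directly, without your stronger target that $A$ preserves $\varphi_0$ up to sign and scale.

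Your last step is also insufficient. For $f^*g=e^{2u}g$, integrating $Q(\varphi,\varphi)=14\,\mathrm{Vol}$ and using volume invariance give only integral identities such as $\int_M e^{7u}\,\mathrm{vol}_g=\mathrm{Vol}(M)$, which do not force $u\equiv 0$. The paper instead uses that $g$ and $f^*g$ are both scalar-flat. The conformal change formula then gives $12\Delta u=30|du|^2$, integrating yields $du=0$, and only then does volume invariance give $u=0$.
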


\begin{proof}
    
    As in the beginning of this section, let $V = \mathbb{R}^7$ be the standard representation of $G_2.$ Let $q\in\Sym^2((\Lambda^3_1\oplus\Lambda^3_{27})^*)\otimes\Lambda^7V^*$ be the quadratic form $q(\alpha, \beta) = \alpha\wedge C\beta.$ The group $GL(V)$ acts on the space of quadratic forms $\Sym^2(\Lambda^3_1\oplus\Lambda^3_{27})\otimes\Lambda^7V^*$, and we claim that the stabilizer of $q$ under this action is a subgroup of the orientation preserving conformal group $\mathbb{R}_\times\cdot SO(7).$ In fact, we will show the equivalent statement that any matrix in $SL(V)$ preserving $q$ is in $SO(7).$ 
    
    Let $H\subseteq SL(V)$ be the subset of matrices stabilizing $q.$ The derivative of the action of $SL(V)$ on $\Sym^2(\Lambda^3_1\oplus\Lambda^3_{27})\otimes\Lambda^7V^*$ at $q$ is a linear map $$T:\mathfrak{sl}(V)\to \Sym^2(\Lambda^3_1\oplus\Lambda^3_{27})\otimes\Lambda^7V^*$$ which is $G_2$-invariant. The Lie algebra $\mathfrak{h}$ of $H$ is clearly given by $\mathfrak{h} = \ker T.$ We will first show that $\mathfrak{h}$ consists entirely of skew-symmetric matrices. 
    
    To compute this kernel, we take advantage of the $G_2$ invariance of $T.$ In the orthogonal decomposition of the vector space $\mathfrak{sl}(V)$ into traceless symmetric matrices and anti-symmetric matrices, $$\mathfrak{sl}(V) \cong \Sym^2_0V\oplus\mathfrak{so}(V),$$ the summand $\Sym^2_0V$ is irreducible as a representation of $G_2$ (see \cite{Bryant} or \cite[Chapter 10]{Joycebook}.) So, by Schur's Lemma, to show that $\Ker T\cap {\Sym^2_0V} = \{0\},$ it suffices to show that $T$ is non-vanishing at a single element, in other words, that for some $X\in \Sym^2_0V$ there exists $\eta\in\Lambda^3_1\oplus\Lambda^3_{27}$ for which $$q(X\eta, \eta)+q(\eta, X\eta)\neq0.$$ Consider the element $\eta = e_5\wedge(e_1\wedge e_2 - e_3\wedge e_4)\in\Lambda^3_{27}$ and $X\in \Sym^2_0V$ given by: $$X = \begin{pmatrix}0 & 1 & 0& 0& 0& 0& 0\\ 1 &0  &0 &0 &0 &0 &0 \\ 0& 0&1 &0 &0 &0 &0 \\ 0 & 0& 0& 0& 0& 0& 0\\  0&0 &0 &0 &0 &0 &0 \\0&0 &0 &0 &0 &0 &0 \\ 0&0 &0 &0 &0 &0 & -1\\\end{pmatrix}.$$ We compute that $X\cdot\eta = -e_3\wedge e_4\wedge e_5$ decomposes according to $\Lambda^3 V^* = \Lambda^3_1\oplus\Lambda^3_7\oplus\Lambda^3_{27}$ as: $$X\cdot \eta = -\frac17\varphi_0+0+\left(X\cdot \eta +\frac17\varphi_0\right).$$ Using the two facts that (1) $\Lambda^3_1$ and $\Lambda^3_{27}$ are orthogonal with respect to $q$ and (2) that on $\Lambda^3_{27}$, we have $q(\alpha, \beta)=-\langle\alpha, \beta\rangle \text{d}vol$ \cite[p. 25]{Verbitsky}, we compute: $$q(X\eta, \eta)+q(\eta, X\eta) = -e_1\wedge\dots\wedge e_7\neq 0.$$
    Therefore, $\Ker T\subseteq \Lambda^2V = \mathfrak{so}(V),$ which shows that $H_0,$ the connected component of the identity of $H,$ is a closed subgroup of $SO(V).$ Since $H$ is clearly real algebraic, it has finitely many connected components \cite[Theorem 3]{Whitney}. Since each component is homeomorphic to $H_0$ by left multiplication, $H$ must be compact. Therefore, $H$ must preserve some positive definite quadratic form on $V.$ We know that $G_2\subseteq H$ and $G_2$ preserves a unique such form, therefore $H$ must preserve that same form; so $H \subseteq SO(V),$ which suffices for our claim.

    Now, take $f:M\to M$ as in the statement of the proposition. By our claim, we must have that $f$ preserves the conformal structure on $M.$ However, $G_2$-metrics are Ricci-flat, so \emph{a fortiori} they are scalar-flat \cite{Bonan}. Letting $g$ be the $G_2$-metric on $M,$ since $f$ is conformal, we must have $$f^*g = e^{2u}g$$ for some function $u:M\to\mathbb{R}.$ Moreover, $f^*g$ is also scalar-flat since it is a pullback of a scalar-flat metric. The equation for the scalar curvature under conformal change in dimension 7 is: $$s(e^{2u}g) = e^{-2u}\left(s(g) + 12\Delta u-30|du|^2\right)$$ (where we use $\Delta u = d^*du.$) In our situation, this tells us $12\Delta u = 30 |du|^2.$ Integrating both sides of this equation over $M,$ we see that $du = 0,$ so $u$ is a constant. Since $f$ is a diffeomorphism, it must preserve the volume of $M$, which forces $u = 0.$ This proves that $f$ is an isometry.
\end{proof}

Unfortunately, isometries of irreducible $G_2$ manifolds are dynamically uninteresting since they are all finite order, as proved by Crowley-Goette-Hertl using a Bochner formula:

\begin{theorem}[{\cite[Lemma 2.1]{CGH}}]
    The isometry group of a closed, irreducible $G_2$-manifold is finite.
\end{theorem}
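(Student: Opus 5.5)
The statement to prove is that the isometry group of a closed, irreducible $G_2$-manifold $(M,g)$ is finite. I would split this into two parts: first show that $\operatorname{Isom}(M,g)$ is a compact Lie group, and then show that it is $0$-dimensional.

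The first part is the classical Myers--Steenrod theorem. The isometry group of a closed Riemannian manifold is a compact Lie group, and its Lie algebra is the space of Killing vector fields. A compact Lie group is finite exactly when it has dimension zero. So it suffices to show that every Killing field $X$ on $M$ vanishes.

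This is where the Bochner argument enters, using Ricci-flatness. $G_2$-metrics are Ricci-flat, as was already used in the proof of the preceding proposition. Let $\xi = X^\flat$ be the dual $1$-form of a Killing field. The Killing equation gives $\nabla^*\nabla \xi = \operatorname{Ric}(\xi) = 0$. Pairing with $\xi$ and integrating over the closed manifold $M$ yields
$$\int_M |\nabla \xi|^2 = 0,$$
so $\xi$ is parallel. A nonzero parallel $1$-form would force $\operatorname{Hol}(g)$ to fix a nonzero vector in $\mathbb{R}^7$. But $G_2$ acts irreducibly on $\mathbb{R}^7$, and in particular fixes no nonzero vector. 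Alternatively, by de Rham decomposition, $M$ would be locally a product with a line factor, contradicting irreducibility. Hence $X = 0$.

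Putting the two parts together: the Lie algebra of $\operatorname{Isom}(M,g)$ is trivial, so $\operatorname{Isom}(M,g)$ is a compact $0$-dimensional Lie group, hence finite. One could instead argue via $b_1$: since irreducibility implies $\pi_1(M)$ is finite, we have $b_1 = 0$, and Bochner shows that a harmonic $1$-form on a Ricci-flat manifold is parallel. The holonomy argument is more direct, though.

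The only real step is the Bochner identity for Killing fields, $\Delta_{\text{rough}}\xi = \operatorname{Ric}(\xi)$. It is standard and follows from the identity $\nabla^2 X = R(\cdot, X)$ satisfied by Killing fields, so I expect no serious obstacle.
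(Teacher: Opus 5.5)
Your argument is correct and is essentially the proof the paper relies on. The paper gives no proof of its own; it cites Crowley--Goette--Hertl's Bochner-formula argument, in which Killing fields on the closed Ricci-flat manifold are parallel, hence zero because $\Hol(g)=G_2$ fixes no nonzero vector, so the compact Lie group $\operatorname{Isom}(M,g)$ is discrete and therefore finite. That final compactness-plus-discreteness step is also why the paper calls the proof ineffective: it gives finiteness but no bound on the order, which is what motivates the $G_2$ Hurwitz question.
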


Their proof is ineffective, and this circle of ideas does raise two outstanding questions about $G_2$ isometries that would be interesting to consider:

\begin{question}[$G_2$ Hurwitz Inequality]
    Is there a topological bound on the number of isometries of a closed, irreducible $G_2$-manifold coming from, say, its Betti numbers? 
\end{question}

\begin{question}[$G_2$ Nielsen Realization]
    For any finite subgroup $\Gamma$ of the mapping class group of a closed, irreducible $G_2$-manifold $M$, does there exist a $G_2$-metric on $M$ such that $\Gamma$ is realized by isometries?
\end{question}

\bibliographystyle{alpha}
\bibliography{main}

\end{document}